\newtheorem{theorem}{Theorem}[section]
\newtheorem{lemma}[theorem]{Lemma}
\newtheorem{corollary}[theorem]{Corollary}
\newtheorem{remark}[theorem]{Remark}
\newtheorem{assumption}{Assumption}[section]
\date{\displaydate{date}}
\numberwithin{equation}{section}
\newcommand{\N}{\mathbb{N}}
\let\RE\Re
\let\Re=\undefined
\DeclareMathOperator{\Re}{\RE e}
\let\IM\Im
\let\Im=\undefined
\DeclareMathOperator{\Im}{\IM m}
\newcommand{\norm}[1]{\left\|#1\right\|}
\newcommand{\yd}{y^\delta}
\newcommand{\uk}{u_{k}}
\newcommand{\ukd}{u^{\delta}_{k}}
\newcommand{\ukp}{u_{k+1}}
\newcommand{\ukpd}{u^{\delta}_{k+1}}
\newcommand{\udag}{u^\dagger}
\begin{document}

\title{\LARGE \bf Analysis of Generalized Iteratively Regularized Landweber Iterations driven by data}

\author[1]{Andrea Aspri}
\author[2,3,4]{Otmar Scherzer}
\affil[1]{Department of Mathematics, University of Milan}
\affil[2]{Faculty of Mathematics, University of Vienna}
\affil[3]{Johann Radon Institute for Computational and Applied Mathematics (RICAM)}
\affil[4]{Christian Doppler Laboratory for Mathematical Modeling and Simulation of Next Generations of Ultrasound Devices (MaMSi)}

\vspace{5mm}

\date{}

\maketitle

\thispagestyle{plain}
\pagestyle{plain}

\let\thefootnote\relax\footnotetext{
AMS 2020 subject classifications: 65J20  (65J10, 65J15, 65J22) 

Key words and phrases: data-driven methods, Landweber iteration, regularization methods, discrepancy principle.

\thanks{}

}

\begin{abstract}
We investigate generalized versions of the Iteratively Regularized Landweber Method, initially introduced in \cite{Sch98}, to address linear and nonlinear ill-posed problems. Our approach is inspired by the data-driven perspective emphasized in the introduction by Aspri et al. \cite{AspBanOktSch20}. We provide a rigorous analysis establishing convergence and stability results and present numerical outcomes for linear operators, with the Radon transform serving as a prototype.
\end{abstract}
\vskip.15cm

\vskip 10truemm

\section{Introduction}
This article addresses and generalizes an idea mentioned in the introduction of the paper by Aspri et al. \cite{AspBanOktSch20}, for which neither a demonstration nor accompanying numerical results have been provided. The focus is on the convergence of generalized versions of the iteratively regularized Landweber method, initially proposed in \cite{Sch98}, designed for solving linear and nonlinear ill-posed problems represented by the operator equation
\begin{equation} \label{eq:operator}
F(u) = y.
\end{equation}
The analysis centers on operators 
$F:\mathcal{D}(F) \subset X \to Y$, operating between real separable Hilbert spaces $X$ and $Y$, equipped with inner products $\langle\cdot,\cdot\rangle$ 
and norms $\|\cdot\|$, respectively. 
We consider noisy data denoted as $\yd$ with 
\begin{equation*}
\norm{\yd-y}\leq \delta.
\end{equation*} 
Classical research on inverse problems has primarily focused on establishing conditions for the existence and uniqueness of solutions of \eqref{eq:operator}, providing stability estimates and developing stable methods for approximating solutions in the presence of noise, assuming a precise knowledge of the operator $F$. These methods, known as knowledge-driven approaches (see for example \cite{EngHankNeu96,BenBur18,ArrMaaOktSch19, KapSom05}), have been successful but come with limitations. The forward model $F$ is very often an approximate representation of reality, and making it more realistic can be challenging due to a limited understanding of the underlying physical or technical setting. Additionally, computational complexity may restrict the feasibility of accurate analytical models, potentially hindering real-time applications.
In the last years, data-driven approaches, commonly employed in machine learning, provide numerous methods to enhance analytical models, address their inherent limitations and improve some of the existing numerical results, see, for example,  \cite{AdlOek17,AgnColLasMurSanSil20,ArrMaaOktSch19,BenCatRug23,BubBurHelRat23,BubGalLasPraRatSilt21,BubKutLas19,CanSant22,HalNgu23,KalNgu22,Kor22,SchHofNas23} and references therein. 
Current research in the realm of inverse problems is focused on establishing a robust mathematical foundation that integrates data-driven models, especially those rooted in deep learning, with the knowledge-driven ones \cite{ArrMaaOktSch19}, see also, for example, \cite{AlbDeVLasRatSan21,AspBanOktSch20,SchHofNas23}.
In this context, the authors, in \cite{AspBanOktSch20}, propose a data-driven regularization method for solving linear and nonlinear ill-posed problems starting from the iteratively regularized Landweber iteration, firstly studied in \cite{Sch98} (also we refer the reader to \cite{KalNeuSch08}). This method involves iterative updates of the form
\begin{equation}\label{eq:ir_landweber_iter}
\ukp := \uk - F'(\uk)^*\bigl( F(\uk)-\yd \bigr) - \lambda_k(\uk - u^{(0)}),\qquad k\in\mathbb{N},
\end{equation}
where $u^{(0)}$ represents an initial guess that incorporates a priori knowledge about the solution to be recovered. Note that the previous iteration can be equivalentlly rewritten as
\begin{equation}\label{eq:aux0_intr}
\ukp := (1-\lambda_k)\uk - F'(\uk)^*\bigl( F(\uk)-\yd \bigr) + \lambda_k u^{(0)},\qquad k\in\mathbb{N},
\end{equation}
The method operates as a regularization procedure for noisy data when utilizing a posteriori stopping criteria, such as the discrepancy principle, which halts the iteration after the first $k_*=k_*(\delta,\yd)$ steps satisfying:
\begin{equation*}
\norm{F(u_{k_*})-\yd}\leq \tau\delta < \norm{F(\uk)-\yd}, \qquad 0\leq k< k_*, 
\end{equation*}  
for some $\tau>1$. 
In \cite{Sch98}, it has been proven that the modified Landweber iteration converges to a solution which is the closest to $u^{(0)}$ (see also \cite{KalNeuSch08}). 
The damping term $\lambda_k(\uk-u^{(0)})$ was initially incorporated as an extra stabilizing element in Gauß-Newton's method, as detailed in \cite{Bak92}. Subsequently, it was noted that this term exerts a similar stabilizing influence in various iterative regularization methods when employed as an augmenting factor.

Aspri et al. \cite{AspBanOktSch20} explore a variant of the iteratively regularized Landweber method, adopting a data-driven approach. Assuming access to expert data, $(u^{(i)},F(u^{(i)}))_{1 \leq i \leq n}$, they propose a method based on a black-box strategy
\begin{equation}\label{SDschemeNew_II}
\ukp := \uk - F'(\uk)^*\bigl( F(\uk)-\yd \bigr) - {\lambda^{\delta}_k} \hat{A}'(\uk) ^*\bigl(\hat{A}(\uk) -\yd \bigr), \qquad k\in\N.
\end{equation}
Here, the operator $\hat{A}$ is handcrafted, mapping each $u^{(i)}$ to $F(u^{(i)})$, $i=1,\ldots,n$, and vice versa. The purpose of the second term in \eqref{SDschemeNew_II} is to introduce a bias for the expert data. It is noteworthy that \emph{system identification} and \emph{black-box} theory received extensive consideration in the sixties of the last century, as documented, for instance, in \cite{Pap62}.

This paper explores a generalized version of \eqref{eq:ir_landweber_iter} that we call Generalized Iteratively Regularized Landweber Iteration (GIRLI), which disregards image data $(F(u^{(i)})_{1 \leq i \leq n})$, and takes into account only the data $u^{(i)}$, for $i=1,\cdots,n$. To be more precise, taking inspiration from \eqref{eq:aux0_intr}, we explore two distinct approaches that consider either the average or the geometric mean of the data $u^{(i)}$, that is
\begin{equation*}
\ukp := (1-\lambda_k)\uk - F'(\uk)^*\bigl( F(\uk)-\yd \bigr) + \frac{\lambda_k}{n}\sum_{i=1}^{n}u^{(i)},\qquad k\in\mathbb{N},
\end{equation*}
and
\begin{equation*}
\ukp := (1-\lambda_k)\uk - F'(\uk)^*\bigl( F(\uk)-\yd \bigr) + \lambda_k\sqrt[n]{\prod_{i=1}^{n} u^{(i)}},\qquad k\in\mathbb{N},
\end{equation*}
both complemented by an additional initial guess denoted by $u_0$.
We refer to the second method as Generalized Iteratively Regularized Landweber Iteration with Geometric Mean (GIRLI-GM).
\\
Taking inspiration from \cite{Sch98,KalNeuSch08}, the main objective of the paper is to prove the convergence of both the above methods to the solution of the operator equation \eqref{eq:operator} closest to the average
$\frac{1}{n}\sum_{i=1}^{n}u^{(i)}$, as stated in \cite{AspBanOktSch20}, and to $\sqrt[n]{\prod_{i=1}^{n} u^{(i)}}$, respectively. The study establishes strong convergence and stability results under assumptions of Fréchet differentiability and the usual tangential cone condition for $F$. Numerical simulations, focusing on the Radon transform, complement the theoretical analysis, comparing various regularization methods, including GIRLI, the data-driven iteratively regularized Landweber scheme \eqref{SDschemeNew_II}, the classic iteratively regularized Landweber scheme \eqref{eq:ir_landweber_iter}, and the Landweber iteration.

The outline of the paper is as follows: In Section \ref{sec:GLI} we analyze GIRLI iterates in an infinite dimensional Hilbert 
space setting and prove strong convergence and stability. In Section \ref{sec:NM} we discuss the numerical implementation of GIRLI and 
study applications to the Radon transform (for the linear case) with full and limited data. Moreover, we compare the results with other iterative methods. In Section \ref{sec:conclusion}, we present our conclusions.

\section{Generalized Iteratively Regularized Landweber Iteration}\label{sec:GLI}
In this section, we analyse the convergence behavior of the generalized iteratively regularized Landweber iterations, as introduced in the previous section. We assume to have a set of data $u^{(i)}$, for $i=1,\cdots,n$ similar to what is intended to be reconstructed, and therefore, can be utilized as prior information in the reconstruction procedure. 
For clarity, we restate the GIRLI iterations below
\begin{equation}\label{eq:GIRLI}
\begin{aligned}
&\ukpd := (1-\lambda_k)\ukd - F'(\ukd)^*\bigl( F(\ukd)-\yd \bigr) + \frac{\lambda_k}{n}\sum_{i=1}^{n}u^{(i)},\qquad k\in\mathbb{N},\\
&u^{\delta}_0=u_0 - \textrm{initial guess},
\end{aligned}
\end{equation}
and
\begin{equation}\label{eq:GIRLI-GM}
\begin{aligned}
&\ukpd := (1-\lambda_k)\ukd - F'(\ukd)^*\bigl( F(\ukd)-\yd \bigr) + \lambda_k \sqrt[n]{\prod_{i=1}^{n}u^{(i)}},\qquad k\in\mathbb{N},\\
&u^{\delta}_0=u_0 - \textrm{initial guess},
\end{aligned}
\end{equation}
where $\yd$ represents noisy data, satisfying 
\begin{equation*}
\norm{\yd-y}\leq \delta,
\end{equation*}
and ${\lambda_k}$ is a suitably chosen parameter.\\	
In the sequel, we assume that \eqref{eq:operator} admits a solution $\udag$, which may not be not unique.

For our analysis, we follow the methodology outlined in \cite[Chapters 3, Section 2]{KalNeuSch08} and \cite{Sch98}, given the modified nature of the Landweber scheme in our iterates.\\
We use the notation $u_k=\ukd$, when $\delta=0$.

In the following, we establish some assumptions regarding the distance between a solution of \eqref{eq:operator} and the average and mean geometric terms, the operator $F$ and the damping factor $\lambda_k$ in order to ensure the local convergence of the iterative scheme \eqref{eq:GIRLI}. Additionally, we define some specific constants pivotal to the proof of the convergence theorem. 

Let us denote by $B_{r}(u)$ a ball of center $u$ and radius $r$.
\begin{assumption}\label{as:average_mean_geometric}
Let us assume that there exists $\udag\in B_{\rho}(u_0)$, with $\rho>0$ and $\udag$ solution to the operator equation \eqref{eq:operator}, such that 
\begin{enumerate}
 \item[(a)] $\norm{\udag-\frac{1}{n}\sum_{i=1}^{n}u^{(i)}}< \rho$;
 \item[(b)] $\norm{\udag-\sqrt[n]{\prod_{i=1}^{n}u^{(i)}}}< \rho$.
\end{enumerate}
\end{assumption}
Moreover, we also assume the following hypotheses. 
\begin{assumption}\label{as:fa}
Given $r>\rho$ (which will be specified later), we require that
	\begin{enumerate}
		\item $F$ satisfies the \emph{tangential cone condition}, that is 
		\begin{equation}\label{eq:tcc}
		\norm{F(u) - F(v) - F'(u)(u-v)} \leq \eta \norm{F(u) - F(v)}, \qquad         \forall u,v \in \mathcal{B}_{r}(u_0),
		\end{equation}
		where $0<\eta<\frac{1}{2}$.		 	
		\item $F$ has a continuous Fr\'echet derivative $F'$ with Lipschitz constants $L$, i.e., 
			\begin{equation}\label{eq:lcF}
			\norm{F'(u)} \leq L,\qquad \forall u \in \mathcal{B}_{r}(u_0).
			\end{equation} 
            \item The damping factor $\lambda_k$ is such that 
            \begin{equation}\label{eq:lambdak}
            0<\lambda_k\leq \lambda_{max}< 1.  
            \end{equation}
	\end{enumerate}
\end{assumption}
We commence by presenting the convergence analysis for the iteration defined in \eqref{eq:GIRLI}. Subsequently, the identical theoretical framework is extended to the case of \eqref{eq:GIRLI-GM}.

Here, we show that, under certain conditions, the iterations of \eqref{eq:GIRLI} remain in a ball around $u_0$. 
\begin{lemma}\label{prop:suff_cond}
    Under the Assumptions \ref{as:average_mean_geometric} (a) - \ref{as:fa}, let $\kappa$ be a positive constant such that $\kappa\in(0,1)$ and
    \begin{equation}\label{eq:crho}
        c(\rho):=\rho\frac{1-\lambda_{max}+\sqrt{1+\lambda_{max}(2-\lambda_{max})\frac{L^2}{\kappa^{2}}}}{2-\lambda_{max}}.
    \end{equation}
Let $r=\rho+c(\rho)$ in \eqref{eq:tcc}-\eqref{eq:lcF}, and $\eta$ satisfy the following inequality
\begin{equation}\label{eq:E}
    E:=2-L^2-2\eta-2\lambda_k(1-\eta)-\kappa^2>0.
\end{equation}
If $\ukd\in B_{c(\rho)}(\udag)$, then a sufficient condition for $\ukpd\in B_{c(\rho)}(\udag)\subset B_{\rho+c(\rho)}(u_0)$ is that
\begin{equation}\label{eq:suff_cond}
    \norm{F(\ukd)-\yd}\geq 2(1-\lambda_{max})\frac{1+\eta}{E}\delta.
\end{equation}
\end{lemma}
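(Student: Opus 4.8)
The plan is to follow the classical one-step monotonicity argument for iteratively regularized Landweber methods (\cite{Sch98}, \cite[Ch.~3]{KalNeuSch08}): show that \eqref{eq:suff_cond} forces $\norm{\ukpd-\udag}\le c(\rho)$, after which the inclusion $B_{c(\rho)}(\udag)\subset B_{\rho+c(\rho)}(u_0)$ is immediate from the triangle inequality, since $\udag\in B_{\rho}(u_0)$. Write $\bar u:=\frac1n\sum_{i=1}^{n}u^{(i)}$ and $e_k:=\ukd-\udag$; from \eqref{eq:GIRLI},
\[
e_{k+1}=(1-\lambda_k)e_k+\lambda_k(\bar u-\udag)-F'(\ukd)^*\bigl(F(\ukd)-\yd\bigr).
\]
First I would expand $\norm{e_{k+1}}^2$ into the affine part $\norm{(1-\lambda_k)e_k+\lambda_k(\bar u-\udag)}^2$, the two cross terms $-2(1-\lambda_k)\inner{F'(\ukd)e_k}{F(\ukd)-\yd}$ and $-2\lambda_k\inner{F'(\ukd)(\bar u-\udag)}{F(\ukd)-\yd}$, and $\norm{F'(\ukd)^*\bigl(F(\ukd)-\yd\bigr)}^2$.

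The terms inherited from the plain Landweber step are handled as usual. Because $\ukd\in B_{c(\rho)}(\udag)\subset B_{\rho+c(\rho)}(u_0)=B_r(u_0)$ and $\udag\in B_r(u_0)$, Assumption \ref{as:fa}(2) gives $\norm{F'(\ukd)^*\bigl(F(\ukd)-\yd\bigr)}^2\le L^2\norm{F(\ukd)-\yd}^2$, and the tangential cone condition \eqref{eq:tcc} for the pair $(\ukd,\udag)$ together with $F(\udag)=y$ and $\norm{\yd-y}\le\delta$ yields the standard lower bound $\inner{F'(\ukd)e_k}{F(\ukd)-\yd}\ge(1-\eta)\norm{F(\ukd)-\yd}^2-(1+\eta)\delta\,\norm{F(\ukd)-\yd}$. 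The genuinely new term is the bias cross term, which I would estimate using only boundedness of $F'$ and Assumption \ref{as:average_mean_geometric}(a), $\abs{\inner{F'(\ukd)(\bar u-\udag)}{F(\ukd)-\yd}}\le L\rho\,\norm{F(\ukd)-\yd}$, followed by Young's inequality weighted by $\kappa^2$, i.e. $2\lambda_k L\rho\,\norm{F(\ukd)-\yd}\le\kappa^2\norm{F(\ukd)-\yd}^2+\lambda_k^2L^2\rho^2/\kappa^2$. For the affine part I would \emph{not} use convexity of $\norm{\cdot}^2$ (which is too lossy) but expand it fully and use $\norm{e_k}\le c(\rho)$, $\norm{\bar u-\udag}<\rho$ and Cauchy--Schwarz, getting $\bigl((1-\lambda_k)c(\rho)+\lambda_k\rho\bigr)^2$.

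Putting the four pieces together, with $s:=\norm{F(\ukd)-\yd}$, the estimate takes the form
\[
\norm{e_{k+1}}^2\le c(\rho)^2+Q(\lambda_k)-E\,s^2+2(1-\lambda_k)(1+\eta)\,\delta\,s,
\]
where $E$ is exactly the constant in \eqref{eq:E} (after the Young step the $s^2$-coefficient is $L^2-2(1-\lambda_k)(1-\eta)+\kappa^2=-E$, using $2-2\eta-2\lambda_k(1-\eta)=2(1-\eta)(1-\lambda_k)$), and $Q(\lambda_k):=-\lambda_k(2-\lambda_k)c(\rho)^2+2\lambda_k(1-\lambda_k)\rho\,c(\rho)+\lambda_k^2\rho^2(1+L^2/\kappa^2)$. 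The crucial point is $Q(\lambda_k)\le0$: dividing by $\lambda_k>0$, this says that $c(\rho)$ lies to the right of the larger root of $(2-\lambda_k)c^2-2(1-\lambda_k)\rho c-\lambda_k\rho^2(1+L^2/\kappa^2)=0$, whose discriminant collapses to $\rho^2\bigl(1+\lambda_k(2-\lambda_k)L^2/\kappa^2\bigr)$ since $(1-\lambda_k)^2+\lambda_k(2-\lambda_k)=1$; one then checks that this root is increasing in $\lambda_k\in(0,1)$, hence bounded above by its value at $\lambda_{max}$, which is precisely $c(\rho)$ of \eqref{eq:crho}. Consequently $Q(\lambda_k)\le0$, and since $E>0$ the hypothesis \eqref{eq:suff_cond} (with $\lambda_k\le\lambda_{max}$) makes the residual-dependent remainder $-E\,s^2+2(1-\lambda_k)(1+\eta)\delta\,s\le0$, so $\norm{e_{k+1}}^2\le c(\rho)^2$, i.e. $\ukpd\in B_{c(\rho)}(\udag)\subset B_{\rho+c(\rho)}(u_0)$, and the lemma follows.

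I expect the main obstacle to be the bookkeeping around the bias term: the Young weight must be exactly $\kappa^2$ so that the surviving $s^2$-coefficient is the $-E$ of \eqref{eq:E}, while the leftover $\lambda_k^2L^2\rho^2/\kappa^2$ has to recombine with the expanded affine part into a quadratic in $c(\rho)$ whose relevant root is \emph{exactly} $c(\rho)$ --- this is what dictates the particular algebraic shape of \eqref{eq:crho}. The secondary technical step, needed to pass from $\lambda_k$ to $\lambda_{max}$ and keep the radius $c(\rho)$ uniform in $k$, is verifying that the root of that quadratic is monotone increasing in $\lambda_k$ on $(0,1)$.
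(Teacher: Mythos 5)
Your proposal is correct and follows essentially the same route as the paper's proof: the same decomposition of $\norm{\ukpd-\udag}^2$, the same tangential-cone and $\kappa^2$-weighted Young estimates producing the constant $E$, and the same quadratic inequality in $c(\rho)$ whose larger root yields \eqref{eq:crho}. Your explicit verification that this root is monotone in $\lambda_k$ on $(0,1)$ (so that evaluating at $\lambda_{max}$ suffices uniformly in $k$) is a detail the paper passes over silently, but it is the same argument.
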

\begin{proof}
    Let us consider $\norm{\ukpd-\udag}^2$. Using \eqref{eq:GIRLI} and adding and subtracting properly $\lambda_k \udag$, we find that 
    \begin{equation}\label{eq:norm_ukp1muk}
    \begin{aligned}
        \norm{\ukpd-\udag}^2&=(1-\lambda_k)^2\norm{\ukd-\udag}^2+\frac{\lambda^2_k}{n^2}\norm{\sum_{i=1}^{n}(\udag-u^{(i)})}^2+\norm{F'(\ukd)^*(F(\ukd)-\yd)}^2\\
        &+\frac{2\lambda_k}{n}(1-\lambda_k)\left\langle\ukd-\udag,\sum_{i=1}^{n}(u^{(i)}-\udag)\right\rangle\\
        &+\frac{2\lambda_k}{n}\left\langle \sum_{i=1}^{n}(\udag-u^{(i)}),F'(\ukd)^*(F(\ukd)-\yd)\right\rangle\\
        &+2(1-\lambda_k)\left\langle F'(\ukd)(\ukd-\udag),\yd-F(\ukd)\right\rangle.
    \end{aligned}
    \end{equation}
Note that 
\begin{equation*}
\begin{aligned}
    \langle\yd-F(\ukd),F'(\ukd)(\ukd-\udag)\rangle&=\langle \yd-F(\ukd), \yd-y\rangle-\norm{\yd-F(\ukd)}^2\\
    &-\left\langle \yd-F(\ukd),F(\ukd)-F(\udag)-F'(\ukd)(\ukd-\udag)\right\rangle,
\end{aligned}
\end{equation*}
hence, by the tangential cone condition \eqref{eq:tcc} (note that $\ukd,\ \udag  \in B_{\rho+c(\rho)}(u_0)$), and the fact that $\norm{F(\ukd)-F(\udag)}\leq \norm{F(\ukd)-\yd}+\norm{\yd-y}$, we get
\begin{equation}\label{eq:est_mixed_term1}
\begin{aligned}
    2(1-\lambda_k)\Big|\langle\yd-F(\ukd)&,F'(\ukd)(\ukd-\udag)\rangle\Big|\\
    &\leq 2(1-\lambda_k)\norm{\yd-F(\ukd)}\left((1+\eta)\delta-(1-\eta)\norm{\yd-F(\ukd)}\right).
\end{aligned}
\end{equation}
Additionaly, since $\ukd\in B_{\rho+c(\rho)}(u_0)$, using the assumption that $\norm{F'(u)}\leq L$, for all $u\in B_{\rho+c(\rho)}(u_0)$, we find
\begin{equation}\label{eq:est_mixed_term2}
\begin{aligned}
    \Bigg|\frac{2\lambda_k}{n}&\left\langle \sum_{i=1}^{n}(\udag-u^{(i)}),F'(\ukd)^*(F(\ukd)-\yd)\right\rangle\Bigg|\\
    &= \frac{2\lambda_k}{n}\Bigg|\left\langle F'(\ukd)\left(\sum_{i=1}^{n}(\udag-u^{(i)})\right),F(\ukd)-\yd\right\rangle\Bigg|\\
    &\leq \frac{2\lambda_k}{n} \norm{F'(\ukd)}\norm{\sum_{i=1}^{n}(\udag-u^{(i)})} \norm{F(\ukd)-\yd}\\
    &\leq \frac{2\lambda_k L}{n} \norm{\sum_{i=1}^{n}(\udag-u^{(i)})} \norm{F(\ukd)-\yd}\\
    &\leq \frac{\lambda^2_kL^2}{n^2\kappa^2} \norm{\sum_{i=1}^{n}(\udag-u^{(i)})}^2+\kappa^2 \norm{F(\ukd)-\yd}^2.
\end{aligned}
\end{equation}
Using \eqref{eq:est_mixed_term1} and \eqref{eq:est_mixed_term2} into \eqref{eq:norm_ukp1muk}, we get
\begin{equation}\label{eq:aux1}
\begin{aligned}
    \norm{\ukpd-\udag}^2&\leq(1-\lambda_k)^2\norm{\ukd-\udag}^2+\frac{\lambda^2_k}{n^2}\norm{\sum_{i=1}^{n}(\udag-u^{(i)})}^2+L^2\norm{(F(\ukd)-\yd)}^2\\
        &+\frac{2\lambda_k}{n}(1-\lambda_k)\norm{\ukd-\udag}\norm{\sum_{i=1}^{n}(u^{(i)}-\udag)}\\
        &+2(1-\lambda_k)\norm{\yd-F(\ukd)}\left((1+\eta)\delta-(1-\eta)\norm{\yd-F(\ukd)}\right)\\
        &+\frac{\lambda^2_kL^2}{n^2\kappa^2} \norm{\sum_{i=1}^{n}(\udag-u^{(i)})}^2+\kappa^2 \norm{F(\ukd)-\yd}^2\\
        &=(1-\lambda_k)^2\norm{\ukd-\udag}^2 + \frac{\lambda^2_k}{n^2}\left(1+\frac{L^2}{\kappa^2}\right)\norm{\sum_{i=1}^{n}(\udag-u^{(i)})}^2\\
        &+\frac{2\lambda_k}{n}(1-\lambda_k)\norm{\ukd-\udag}\norm{\sum_{i=1}^{n}(u^{(i)}-\udag)}\\
        &-\norm{F(\ukd)-\yd}\left(2(1-\lambda_k)(1+\eta)\delta-E\norm{F(\ukd)-\yd}\right)
     \end{aligned}
     \end{equation}
     that is 
\begin{equation*}
\begin{aligned}
        \norm{\ukpd-\udag}^2&\leq (1-\lambda_k)^2\norm{\ukd-\udag}^2 + \frac{\lambda^2_k}{n^2}\left(1+\frac{L^2}{\kappa^2}\right)\norm{\sum_{i=1}^{n}(\udag-u^{(i)})}^2\\
        &+\frac{2\lambda_k}{n}(1-\lambda_k)\norm{\ukd-\udag}\norm{\sum_{i=1}^{n}(u^{(i)}-\udag)},
\end{aligned}
\end{equation*}
where the last inequality comes from \eqref{eq:E} and \eqref{eq:suff_cond}. Then, thanks to Assumption \eqref{as:average_mean_geometric} (a) and the fact that $\norm{\ukd-\udag}=c(\rho)$, we obtain
\begin{equation*}
    \norm{\ukpd-\udag}^2\leq (1-\lambda_k)^2 c^2(\rho)+\lambda_k^2\left(1+\frac{L^2}{\kappa^2}\right)\rho^2+2\lambda_k(1-\lambda_k)\rho c(\rho)\leq c^2(\rho),
\end{equation*}
where the last inequality comes from \eqref{eq:crho} and the fact that 
\begin{equation*}
    (1-\lambda_k)^2 c^2(\rho)+\lambda_k^2\left(1+\frac{L^2}{\kappa^2}\right)\rho^2+2\lambda_k(1-\lambda_k)\rho c(\rho)\leq c^2(\rho)
\end{equation*}
if 
\begin{equation*}
    c(\rho)\geq \rho \frac{1-\lambda_k+\sqrt{1+\lambda_k(2-\lambda_k)\frac{L^2}{\kappa^{2}}}}{2-\lambda_k},
\end{equation*}
hence, in particular, for $c(\rho)$ as defined in \eqref{eq:crho}.
\end{proof}
\begin{remark}
It's worth highlighting that, thanks to Assumption \ref{as:average_mean_geometric} (b), the calculations for establishing the convergence and stability of GIRLI-GM \eqref{eq:GIRLI-GM} are exactly the same of \eqref{eq:GIRLI}, except for a factor $1/n$ which appears in some estimates for \eqref{eq:GIRLI}.
\end{remark}

The previous proposition provides a stopping criterion (the discrepancy principle) that is the iteration is stopped after $k^{\dag}=k^{\dag}(\delta,\yd)$ steps so that
\begin{equation}\label{eq:discr_princ1}
    \norm{\yd-F(u^{\delta}_{k^{\dag}})}\leq \tau\delta\leq \norm{\yd-F(\ukd)},\qquad 0\leq k<k^{\dag},
\end{equation}
where
\begin{equation}\label{eq:discr_princ2}
    \tau> 2(1-\lambda_{max})\frac{1+\eta}{E}.
\end{equation}
Next, we prove a result which gives information regarding the sum of the residuals till the iteration $k^{\dag}$, and in particular a sufficient condition which guarantees that $k^{\dag}$ is finite when $\delta>0$. The proof of the following result is completely analogous to the one in \cite{KalNeuSch08} but, for the reader's convenience, we report it here. 
\begin{corollary}\label{corollary}
    Let the Assumptions \ref{as:average_mean_geometric} (a) - \ref{as:fa} and the hypotheses of Proposition \ref{prop:suff_cond} be satisfied. The use of the discrepancy principle \eqref{eq:discr_princ1}-\eqref{eq:discr_princ1} gives that
    \begin{equation*}
        k^{\dagger}(\tau\delta)^2< \sum_{k=0}^{k^{\dagger}-1}\norm{\yd-F(\ukd)}^2\leq \frac{\rho^2}{D}\left[1+2\left(1+\frac{L^2}{\kappa^2}\right)\sum_{k=0}^{k^{\dag}-1}\lambda_k \right],
    \end{equation*}
    where
    \begin{equation*}
        D:=E-2(1-\lambda_k)(1+\eta)\tau^{-1}>0.
    \end{equation*}
\end{corollary}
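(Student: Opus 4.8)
The plan is to obtain the statement by \emph{summing} the one--step estimate \eqref{eq:aux1} already established inside the proof of Lemma~\ref{prop:suff_cond}; essentially no new inequality is needed, only bookkeeping. The left-hand bound is immediate from the stopping rule: for $0\le k<k^{\dagger}$ one has $\norm{\yd-F(\ukd)}>\tau\delta$ (strictly, $k^{\dagger}$ being by definition the first index at which the discrepancy reaches $\tau\delta$), so summing the $k^{\dagger}$ terms gives $\sum_{k=0}^{k^{\dagger}-1}\norm{\yd-F(\ukd)}^{2}>k^{\dagger}(\tau\delta)^{2}$.

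For the upper bound I would first record that all iterates $u_{0}^{\delta},\dots,u_{k^{\dagger}}^{\delta}$ stay in $B_{c(\rho)}(\udag)$: this follows by induction from Lemma~\ref{prop:suff_cond}, since for $k<k^{\dagger}$ the discrepancy principle together with \eqref{eq:discr_princ2} gives $\norm{F(\ukd)-\yd}>\tau\delta\ge 2(1-\lambda_{max})\tfrac{1+\eta}{E}\delta$, i.e.\ the sufficient condition \eqref{eq:suff_cond}; hence \eqref{eq:aux1} is available for every such $k$. Rearranging \eqref{eq:aux1} so that the residual stands alone on the left and using $\delta\le\tau^{-1}\norm{F(\ukd)-\yd}$ to turn the $\delta$--term into a multiple of $\norm{F(\ukd)-\yd}^{2}$, one reaches, for $0\le k<k^{\dagger}$,
\[
D\,\norm{F(\ukd)-\yd}^{2}\le (1-\lambda_{k})^{2}\norm{\ukd-\udag}^{2}-\norm{\ukpd-\udag}^{2}+R_{k},
\]
where $D$ is the constant in the statement (its positivity being exactly what \eqref{eq:discr_princ2} enforces) and $R_{k}$ is the sum of the two ``prior data'' terms of \eqref{eq:aux1}, namely $\tfrac{2\lambda_{k}(1-\lambda_{k})}{n}\norm{\ukd-\udag}\,\norm{\sum_{i=1}^{n}(u^{(i)}-\udag)}$ and $\tfrac{\lambda_{k}^{2}}{n^{2}}\bigl(1+\tfrac{L^{2}}{\kappa^{2}}\bigr)\norm{\sum_{i=1}^{n}(\udag-u^{(i)})}^{2}$.

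The remaining work is to recast the right-hand side as a telescoping part plus an $O(\lambda_{k}\rho^{2})$ remainder. Using Assumption~\ref{as:average_mean_geometric}(a) in the form $\tfrac1n\norm{\sum_{i=1}^{n}(u^{(i)}-\udag)}<\rho$, the elementary bound $2ab\le a^{2}+b^{2}$ on the first term of $R_{k}$, and the identity $(1-\lambda_{k})^{2}+\lambda_{k}(1-\lambda_{k})=1-\lambda_{k}\le1$, one absorbs everything involving $\norm{\ukd-\udag}$ into $\norm{\ukd-\udag}^{2}+\lambda_{k}\rho^{2}$, while $\lambda_{k}\le1$ bounds the second term of $R_{k}$ by $\lambda_{k}(1+\tfrac{L^{2}}{\kappa^{2}})\rho^{2}$; thus $D\norm{F(\ukd)-\yd}^{2}\le\norm{\ukd-\udag}^{2}-\norm{\ukpd-\udag}^{2}+(2+\tfrac{L^{2}}{\kappa^{2}})\lambda_{k}\rho^{2}$. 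Summing over $k=0,\dots,k^{\dagger}-1$, the telescoping collapses to $\norm{u_{0}-\udag}^{2}-\norm{u_{k^{\dagger}}^{\delta}-\udag}^{2}\le\rho^{2}$ (since $\udag\in B_{\rho}(u_{0})$) and the remainder is $\le2\rho^{2}(1+\tfrac{L^{2}}{\kappa^{2}})\sum_{k}\lambda_{k}$ (as $2+\tfrac{L^{2}}{\kappa^{2}}\le2(1+\tfrac{L^{2}}{\kappa^{2}})$); dividing by $D$ gives the asserted bound, and combining it with $\sum_{k=0}^{k^{\dagger}-1}\lambda_{k}\le\lambda_{max}k^{\dagger}$ yields the sufficient condition for $k^{\dagger}<\infty$ when $\delta>0$.

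I expect the only delicate point to be the constant--chasing in the last step — getting the prefactor to come out as $2(1+L^{2}/\kappa^{2})$ rather than something marginally larger, and confirming that the $E$--$\tau$ relation making $D$ positive is precisely the one stated in \eqref{eq:discr_princ2}; the analytic content is already entirely contained in the previously proved estimate \eqref{eq:aux1}.
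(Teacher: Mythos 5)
Your proposal is correct and follows essentially the same route as the paper: sum the one-step estimate \eqref{eq:aux1}, use the discrepancy principle to absorb the $\delta$-term into $-D\norm{F(\ukd)-\yd}^2$, and telescope; the only cosmetic difference is that you bound the cross term $\tfrac{2\lambda_k(1-\lambda_k)}{n}\norm{\ukd-\udag}\norm{\sum_i(u^{(i)}-\udag)}$ by Young's inequality, whereas the paper uses the a priori radius bound $\norm{\ukd-\udag}\le c(\rho)<\rho(1+L^2/\kappa^2)$, both yielding the same prefactor $2(1+L^2/\kappa^2)$. (Only your closing aside is slightly off: finiteness of $k^{\dagger}$ for $\delta>0$ follows from the summability $\sum_k\lambda_k<\infty$ invoked later in the paper, not from $\sum_{k<k^{\dagger}}\lambda_k\le\lambda_{max}k^{\dagger}$, which alone does not force $k^{\dagger}<\infty$.)
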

\begin{proof}
Since $\lambda_k$ satisfies \eqref{eq:lambdak}, it is straightforward to prove from \eqref{eq:crho} that 
\begin{equation}\label{eq:aux3}
        \norm{\ukd-u^{\dag}}\leq c(\rho)<\rho\left(1+\frac{L^2}{\kappa^2}\right).
\end{equation}
Note that the last term in \eqref{eq:aux1} can be written equivalently as
\begin{equation}\label{eq:aux2}
\begin{aligned}
    \norm{F(\ukd)-\yd}^2&\left(\frac{2(1-\lambda_k)(1+\eta)\delta}{\norm{F(\ukd)-\yd}}-E\right)\\
    &\leq \norm{F(\ukd)-\yd}^2\left(2(1-\lambda_k)(1+\eta)\tau^{-1}-E\right)=D \norm{F(\ukd)-\yd}^2,
\end{aligned}
\end{equation}
where, in the first inequality, we have used the discrepancy principle, see \eqref{eq:discr_princ1}. Substituting \eqref{eq:aux2} in the last inequality of \eqref{eq:aux1}, we get that
\begin{equation}
\begin{aligned}
    &\norm{\ukpd-\udag}^2+D\norm{F(\ukd)-\yd}^2\\
    &\leq  (1-\lambda_k)^2\norm{(\ukd-\udag)}^2 + \frac{\lambda^2_k}{n^2}\left(1+\frac{L^2}{\kappa^2}\right)\norm{\sum_{i=1}^{n}(\udag-u^{(i)})}^2\\
        &+\frac{2\lambda_k}{n}(1-\lambda_k)\norm{\ukd-\udag}\norm{\sum_{i=1}^{n}(u^{(i)}-\udag)},
\end{aligned}        
\end{equation}
that is, from \eqref{eq:aux3},
\begin{equation}\label{eq:aux4}
\begin{aligned}
    &\norm{\ukpd-\udag}^2+D\norm{F(\ukd)-\yd}^2\\
    &\leq  (1-\lambda_k)^2\norm{(\ukd-\udag)}^2 + \lambda^2_k\left(1+\frac{L^2}{\kappa^2}\right)\rho^2+2\lambda_k(1-\lambda_k)\rho \norm{\ukd-\udag}\\
    &\leq  (1-\lambda_k)^2\norm{(\ukd-\udag)}^2 + \lambda^2_k\left(1+\frac{L^2}{\kappa^2}\right)\rho^2+2\lambda_k(1-\lambda_k)\rho^2\left(1+\frac{L^2}{\kappa^2}\right),
\end{aligned}        
\end{equation}
hence,
\begin{equation*}
\begin{aligned}
    \norm{\ukpd-\udag}^2+&D\norm{F(\ukd)-\yd}^2\\
    &\leq  (1-\lambda_k)^2\norm{(\ukd-\udag)}^2 +2\lambda_k\rho^2\left(1+\frac{L^2}{\kappa^2}\right)\\
    &\leq \norm{(\ukd-\udag)}^2 +2\lambda_k\rho^2\left(1+\frac{L^2}{\kappa^2}\right).
\end{aligned}
\end{equation*}
Therefore, we get
\begin{equation*}
    D\norm{F(\ukd)-\yd}^2\leq\norm{\ukd-\udag}^2-\norm{\ukpd-\udag}^2+2\lambda_k\rho^2\left(1+\frac{L^2}{\kappa^2}\right) 
\end{equation*}
and summing up, we find
\begin{equation*}
    D\sum_{k=0}^{k^{\dag}-1}\norm{F(\ukd)-\yd}^2\leq \norm{u_0-\udag}^2-\norm{u^{\delta}_{k^{\dag}}-\udag}^2+2\rho^2\left(1+\frac{L^2}{\kappa^2}\right)\sum_{k=0}^{k^{\dag}-1}\lambda_k, 
\end{equation*}
that is
\begin{equation*}
    \sum_{k=0}^{k^{\dag}-1}\norm{F(\ukd)-\yd}^2\leq\frac{\rho^2}{D}\left[1+2\left(1+\frac{L^2}{\kappa^2}\right)\sum_{k=0}^{k^{\dag}-1}\lambda_k\right]. 
\end{equation*}
If $\delta=0$, then we can choose $\tau$ sufficiently large such that $D\approx E$, hence
\begin{equation*}
    \sum_{k=0}^{\infty}\norm{F(\ukd)-\yd}^2\leq\frac{\rho^2}{D}\left[1+2\left(1+\frac{L^2}{\kappa^2}\right)\sum_{k=0}^{\infty}\lambda_k\right].
\end{equation*}
\end{proof}
We can now prove that the iterations of GIRLI, see \eqref{eq:GIRLI}, converges to a solution of the operator equation and, in particular, when $u_0=\frac{1}{n}\sum_{i=1}^{n}u^{(i)}$, if $\mathcal{N}(F'(u^*))\subset \mathcal{N}(F'(u))$, for all $u\in B_{\rho+c(\rho)}(u_0)$, where $u^*$ is the $u_0$-minimum norm solution and $\mathcal{N}(F'(\cdot))$ is the nullspace of the operator $F'$, then we get that $u_k\to u^*$. The following theorem follows the same line of reasoning contained in \cite{Sch98,KalNeuSch08}. 
\begin{theorem}\label{th:convergence}
  Let the Assumptions \ref{as:average_mean_geometric} (a) - \ref{as:fa} and the hypotheses of Proposition \ref{prop:suff_cond} be satisfied. Then, iterations \eqref{eq:GIRLI}, applied to the exact data $y$ (that is for $\delta=0$), converges to a solution $\udag$ of $F(u)=y$ in $B_{\rho+c(\rho)}(u_0)$. In particular, when $u_0=\frac{1}{n}\sum_{i=1}^{n}u^{(i)}$, if $\mathcal{N}(F'(u^*))\subset\mathcal{N}(F'(u))$, for all $u\in B_{\rho+c(\rho)}(u_0)$, where $u^*$ is the unique $u_0$-minimum norm solution, then $u_k\to u^*$, as $k\to+\infty$.    
\end{theorem}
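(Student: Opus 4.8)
The plan is to adapt the classical convergence proof for Landweber‑type iterations (\cite{Sch98,KalNeuSch08}) to the present setting, the two new ingredients being the damping factor $\lambda_k$ (which destroys monotonicity of $\norm{u_k-\udag}$) and the data shift $\bar u:=\frac1n\sum_{i=1}^n u^{(i)}$; throughout I write $u_k$ for the exact‑data iterates. \textbf{Step 1 (a priori facts).} First I would note that Lemma~\ref{prop:suff_cond} with $\delta=0$ applies unconditionally — \eqref{eq:suff_cond} becomes $\norm{F(u_k)-y}\ge0$ — so by induction $u_k\in B_{c(\rho)}(\udag)\subset B_{\rho+c(\rho)}(u_0)$ for every $k$ (the base case uses $\norm{u_0-\udag}<\rho\le c(\rho)$, read off from \eqref{eq:crho}). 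The one‑step estimate established inside the proof of Corollary~\ref{corollary}, here
\[
\norm{\ukpd-\udag}^2+D\norm{F(\ukd)-y}^2\le\norm{\ukd-\udag}^2+2\lambda_k\rho^2\bigl(1+\tfrac{L^2}{\kappa^2}\bigr),
\]
then gives, upon summation and assuming in addition $\sum_k\lambda_k<\infty$ (needed anyway for the bound in Corollary~\ref{corollary} to be finite), that $\sum_{k\ge0}\norm{F(u_k)-y}^2<\infty$, hence $\norm{F(u_k)-y}\to0$. Writing $s_k:=2\rho^2(1+L^2/\kappa^2)\sum_{j\ge k}\lambda_j$ (finite, $s_k\to0$), the same estimate shows $\norm{\ukpd-\udag}^2+s_{k+1}\le\norm{\ukd-\udag}^2+s_k$, so $\norm{u_k-\udag}^2+s_k$ is non‑increasing and $\norm{u_k-\udag}^2$ converges to some $\varepsilon\in[0,c(\rho)^2]$.

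\textbf{Step 2 (Cauchy property — the heart of the argument).} Next I would show $(u_k)$ is Cauchy by the standard device: given $j\le k$, choose $l\in\{j,\dots,k\}$ minimizing $\norm{F(u_l)-y}$, and combine the identity $\norm{u_m-u_l}^2=\norm{u_m-\udag}^2-\norm{u_l-\udag}^2+2\langle u_l-u_m,u_l-\udag\rangle$ ($m\in\{j,k\}$) with $\norm{u_k-u_j}\le\norm{u_k-u_l}+\norm{u_l-u_j}$. By Step~1 the terms $\norm{u_m-\udag}^2-\norm{u_l-\udag}^2$ tend to $0$, so everything reduces to $\langle u_l-u_m,u_l-\udag\rangle\to0$. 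Telescoping \eqref{eq:GIRLI} with $\delta=0$ gives $u_{i+1}-u_i=-\lambda_i(u_i-\bar u)-F'(u_i)^*(F(u_i)-y)$, and summing over $i$ between $l$ and $m$ splits the inner product into a ``$\lambda$‑part'' $\sum_i\lambda_i\langle u_i-\bar u,u_l-\udag\rangle$ and a ``residual part'' $\sum_i\langle F(u_i)-y,F'(u_i)(u_l-\udag)\rangle$. The $\lambda$‑part I would bound using $\norm{u_i-\bar u}\le c(\rho)+\rho$ (triangle inequality and Assumption~\ref{as:average_mean_geometric}(a)) and $\norm{u_l-\udag}\le c(\rho)$, so it is $O\bigl(\sum_{i\ge l}\lambda_i\bigr)\to0$. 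For the residual part I would apply \eqref{eq:tcc} to the pairs $(u_i,u_l)$ and $(u_i,\udag)$ to obtain $F'(u_i)(u_l-\udag)=(F(u_l)-y)+r_i$ with $\norm{r_i}\le\eta(\norm{F(u_i)-F(u_l)}+\norm{F(u_i)-y})$; the minimality of $\norm{F(u_l)-y}$ then makes each summand $O(\norm{F(u_i)-y}^2)$, so the residual part is $O\bigl(\sum_{i\ge l}\norm{F(u_i)-y}^2\bigr)\to0$. Both bounds being uniform in $m$, $u_k\to u_\star$ with $\norm{u_\star-\udag}^2=\varepsilon\le c(\rho)^2$ and $\norm{u_\star-u_0}<\rho+c(\rho)$.

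\textbf{Step 3 (limit is a solution; identification with $u^*$).} Since $u_\star$ is an interior point of $B_{\rho+c(\rho)}(u_0)$, $F$ is continuous there, so $F(u_\star)=\lim_k F(u_k)=y$, which is the first assertion. For the refinement take $u_0=\bar u$; then \eqref{eq:GIRLI} reads $u_{k+1}-u_0=(1-\lambda_k)(u_k-u_0)-F'(u_k)^*(F(u_k)-y)$, and since $F'(u_k)^*(F(u_k)-y)\in\overline{\mathcal R(F'(u_k)^*)}=\mathcal N(F'(u_k))^\perp\subset\mathcal N(F'(u^*))^\perp$ (using the nullspace inclusion), induction gives $u_k-u_0\in\mathcal N(F'(u^*))^\perp$ for all $k$, hence $u_\star-u_0\in\mathcal N(F'(u^*))^\perp$ (a closed subspace). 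On the other hand \eqref{eq:tcc} applied to $(u^*,u_\star)$ yields $F'(u^*)(u^*-u_\star)=F(u^*)-F(u_\star)=0$, i.e.\ $u_\star-u^*\in\mathcal N(F'(u^*))$; and the $u_0$‑minimum‑norm solution obeys $u^*-u_0\in\mathcal N(F'(u^*))^\perp$ — the usual characterization, valid because \eqref{eq:tcc} forces $u^*+tz$ to solve $F(u)=y$ whenever $z\in\mathcal N(F'(u^*))$, so minimality of $\norm{u^*-u_0}$ gives the orthogonality (cf.\ \cite{KalNeuSch08}). Subtracting, $u_\star-u^*\in\mathcal N(F'(u^*))\cap\mathcal N(F'(u^*))^\perp=\{0\}$, so $u_\star=u^*$.

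\textbf{Expected main obstacle.} The delicate part is Step~2: the damping term makes the iterate errors non‑monotone, which is repaired by the tail‑sum trick (precisely where $\sum_k\lambda_k<\infty$ is used), and the telescoped difference $u_m-u_l$ now carries the drift contributions $\lambda_i(u_i-\bar u)$ that must be absorbed via boundedness of the iterates together with summability of $\lambda_k$; once this is done, the residual estimate through the tangential cone condition is routine bookkeeping as in \cite{Sch98,KalNeuSch08}.
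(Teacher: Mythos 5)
Your proposal is correct and follows essentially the same route as the paper's proof: boundedness of the iterates from Lemma \ref{prop:suff_cond}, convergence of $\norm{u_k-\udag}$ under the summability assumption $\sum_k\lambda_k<\infty$ (which the paper likewise invokes mid-proof), the Cauchy argument via a minimal-residual index and the inner products $\langle e_l-e_m,e_l\rangle$, the tangential-cone estimate producing the $(1+3\eta)\sum_j\norm{F(u_j)-y}^2$ bound, and the concluding nullspace argument. The only notable difference is presentational: your tail-sum monotonicity trick with $s_k$ establishes convergence of the error norms (and square-summability of the residuals) more directly than the paper's subsequence-plus-product-of-$(1-\lambda_j)$ argument, and your explicit justification that $u^*-u_0\in\mathcal{N}(F'(u^*))^{\perp}$ fills in a step the paper leaves implicit.
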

\begin{proof}
Let us denote by $e_k:=u_k-\udag$.
Then, by means of Proposition \ref{prop:suff_cond}, there exist $M>0$ such that $\norm{e_k}< M$ and, consequently, a subsequence $e_{k_{n}}$ such that $\norm{e_{k_{n}}}\to \varepsilon\geq 0$, as $n\to\infty$.
From \eqref{eq:aux4}, specialized to the case where $\delta=0$, and using the fact that $\sqrt{1+L^2/\kappa^2}>1$, we find
\begin{equation*}
    \begin{aligned}
        \norm{u_{k+1}-\udag}^2&\leq\norm{\ukp-\udag}^2+D\norm{F(\uk)-y}^2\\
        &\leq (1-\lambda_k)^2\norm{\uk-\udag}^2+4\lambda_k^2\left(1+\frac{L^2}{\kappa^2}\right)\rho^2\\
        &\hspace{0.5cm}+2\lambda_k(1-\lambda_k)\rho\sqrt{1+\frac{L^2}{\kappa^2}}\norm{u_k-\udag}\\
        &=\left((1-\lambda_k)\norm{\uk-\udag}+2\lambda_k\sqrt{1+\frac{L^2}{\kappa^2}}\rho\right)^2,
    \end{aligned}
\end{equation*}
hence
\begin{equation}\label{eq:aux5}
        \norm{e_{k+1}}\leq (1-\lambda_k)\norm{e_k}+2\lambda_k\sqrt{1+\frac{L^2}{\kappa^2}}\rho.
\end{equation}
Applying recursively \eqref{eq:aux5}, we find
\begin{equation*}
    \norm{e_{k+1}}\leq \prod_{j=m}^{k}(1-\lambda_j)\norm{e_m}+2\rho\sqrt{1+\frac{L^2}{\kappa^2}}\sum_{j=m}^{k}\prod_{h=j+1}^{k}(1-\lambda_h),
\end{equation*}
with the convention that $\prod_{h=k+1}^{k}(1-\lambda_h)=1$. From a straightfoward calculation, since
\begin{equation*}
    \sum_{j=m}^{k}\prod_{h=j+1}^{k}(1-\lambda_h)=1-\prod_{j=m}^{k}(1-\lambda_j),
\end{equation*}
we find that 
\begin{equation*}
\norm{e_{k+1}}\leq \norm{e_m}\prod_{j=m}^{k}(1-\lambda_j)+2\rho\sqrt{1+\frac{L^2}{\kappa^2}}\left(1-\prod_{j=m}^{k}(1-\lambda_j) \right).
\end{equation*}
Without loss of generality, we assume that $k_n<k+1<k_{n+1}$, then, from the previous inequality,
\begin{equation*}
    \norm{e_{k+1}}\leq \norm{e_{k_{n}}}+\left(2\rho\sqrt{1+\frac{L^2}{\kappa^2}}-\norm{e_{k_{n}}}\right)\left(1-\prod_{j=k_n}^{k}(1-\lambda_j) \right),
\end{equation*}
and
\begin{equation*}
    \norm{e_{k_{n+1}}}\leq \norm{e_{k+1}}+\left(2\rho\sqrt{1+\frac{L^2}{\kappa^2}}-\norm{e_{k+1}}\right)\left(1-\prod_{j=k+1}^{k_{n+1}}(1-\lambda_j) \right),
\end{equation*}
that is
\begin{equation*}
    \norm{e_{{k+1}}}\geq \norm{e_{k_{n+1}}}-\left(2\rho\sqrt{1+\frac{L^2}{\kappa^2}}-\norm{e_{k+1}}\right)\left(1-\prod_{j=k+1}^{k_{n+1}}(1-\lambda_j) \right).
\end{equation*}
Therefore, we find that $\lim_{k\to+\infty}\norm{e_k}=\varepsilon$, since the a priori assumption $\sum_{k=0}^{\infty}\lambda_k<+\infty$ implies that 
\begin{equation}\label{eq:aux6}
    \prod_{j=m}^{k}(1-\lambda_k)\to 1,\qquad \textrm{for}\ m\to+\infty, k>m.
\end{equation}
Now, we can prove the convergence of the iteration to the solution of minimum norm with respect to $u_0$. If, for $n\geq k$, we choose $m$ such that $n\geq m\geq k$, and
\begin{equation*}
    \norm{F(u_m)-y}\leq \norm{F(u_i)-y},\qquad \textrm{for all}\ i\ \textrm{such that}\ k\leq i\leq n, 
\end{equation*}
then the convergence of $u_k$ is guaranteed showing that $e_k$ is a Cauchy sequence, namely that $\langle e_m-e_n,e_m\rangle$ and $\langle e_m-e_k,e_m\rangle$ tend to zero as $k\to+\infty$. We show the result for $\langle e_m-e_n,e_m\rangle$. The same hold for the other term. 

From Corollary \ref{corollary}, when $\delta=0$, we deduce that $\norm{y-F(u_k)}\to 0$ as $k\to+\infty$, that is $u_k$ converges to a solution of the operator equation $F(u)=y$.
From the definition of the iteration, see \eqref{eq:GIRLI}, and recalling that $u_0=\frac{1}{n}\sum_{i=1}^{n}u^{(i)}$, we find, by adding and subtracting suitable quantities,
\begin{equation*}
    e_{k+1}=(1-\lambda_k)e_k-F'(u_k)^*(F(u_k)-y)-\lambda_k(u^*-u_0),
\end{equation*}
hence, recursively,
\begin{equation*}
\begin{aligned}
    e_{k+1}&=\prod_{j=m}^k(1-\lambda_j)e_m-\sum_{j=m}^kF'(u_j)^*(F(u_j)-y)\prod_{h=j+1}^k(1-\lambda_h)\\
    &-(u^*-u_0)\left(1-\prod_{j=m}^{k}(1-\lambda_j)\right),\qquad m\leq k.
\end{aligned}
\end{equation*}
Thanks to this equality, we obtain that
\begin{equation*}
\begin{aligned}
    \big|\langle e_m-e_n,e_m\rangle\big|&\leq \left(1-\prod_{j=m+1}^{n}(1-\lambda_j)\right)\big|\langle u_{m+1}-u_0,e_m\rangle\big|\\
    &+\sum_{j=m+1}^n\prod_{h=j+1}^n (1-\lambda_h) \Big|\langle F(u_j)-y, F'(u_j)(u_{m+1}-u^*)\rangle\Big|.
\end{aligned}
\end{equation*}
Since $\langle u_{m+1}-u_0,e_m\rangle$ is bounded and \eqref{eq:aux6} holds, we have that the first term on the right-hand side of the previous inequality tends to zero as $m\to\infty$. The second term can be estimated using the tangential cone condition \eqref{eq:tcc}, where $r=\rho+c(\rho)$, and the fact that $1-\lambda_j<1$, for all $j$, hence
\begin{equation*}
    \sum_{j=m+1}^n\prod_{h=j+1}^n (1-\lambda_h)\Big|\langle F(u_j)-y, F'(u_j)(u_{m+1}-u^*)\rangle\Big|\leq (1+3\eta)\sum_{j=m+1}^{n}\norm{y-F(u_j)}^2,
\end{equation*}
hence tends to zero as $m\to \infty$. Therefore, both $e_{k+1}$ and $u_{k+1}$ are Cauchy sequences, hence $u_{k} \to \udag$, as $k\to+\infty$.\\
Now, since $\mathcal{N}(F'(u^*))\subset\mathcal{N}(F'(u))$, for all $u\in B_{\rho+c(\rho)}(u_0)$ then 
\begin{equation*}
    u_{k+1}-u_k\in \mathcal{R}(F'(u_k)^*)\subset \mathcal{N}(F'(u_k))^{\perp}\subset \mathcal{N}(F'(u^*))^{\perp},
\end{equation*}
hence $u_k-u_0\in\mathcal{N}(F'(u^*))^{\perp}$, for all $k\in\mathbb{N}$. 
This implies that 
\begin{equation*}
    u^*-\udag=u^*-u_0+u_0-\udag\in\mathcal{N}(F'(u^*))^{\perp}.
\end{equation*}
Suppose that $\udag\neq u^*$ then,
\begin{equation*}
\begin{aligned}
    \norm{F'(u^*)(\udag-u^*)}&\leq \norm{F(\udag)-F(u^*)-F'(u^*)(\udag-u^*)}+\norm{F(u^*)-F(\udag)}\\
    &\leq (1+\eta) \norm{F(u^*)-F(\udag)}=0,
\end{aligned}
\end{equation*}
hence $\udag-u^*\in \mathcal{N}(F'(u^*))\cap \mathcal{N}(F'(u^*))^{\perp}=\{0\}$.
\end{proof}
Analogously to the iteratively regularized Landweber iteration, the convergence of the generalized iteratively regularized Landweber iteration cannot be guaranteed in the context of noisy data, due to the possibility that $\yd$ may not lie within the range of the operator $F$. Specifically, a stable approximation of a solution to $F(u) = y$ is only achievable when employing the discrepancy principle \eqref{eq:discr_princ1}-\eqref{eq:discr_princ2}. This entails terminating the iteration after a finite number of steps. In the next theorem, making use of the discrepancy principle, we prove that the generalized iteratively regularized Landweber iteration is stable under perturbation of the data, that is it acts as a regularization method. 
\begin{theorem}
  Let the Assumptions \ref{as:average_mean_geometric} (a) - \ref{as:fa} and the hypotheses of Proposition \ref{prop:suff_cond} be satisfied. Let us denote by $k^*=k^*(\delta,\yd)$ a positive integer that satisfies the discrepancy principle \eqref{eq:discr_princ1}-\eqref{eq:discr_princ2}. Then the Generalized Iteratively Regularized Landweber iterates $u^{\delta}_{k^*}$ converge to a solution of $F(u)=y$ as $\delta\to 0^+$. If $\mathcal{N}(F'(\udag))\subset \mathcal{N}(F'(u))$, for all $u\in B_{\rho+c(\rho)}(u_0)$, then $u^{\delta}_{k^*}$ converges to $u^*$ as $\delta\to 0^+$.
\end{theorem}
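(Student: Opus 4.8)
The plan is to adapt the classical regularization argument of \cite{Sch98,KalNeuSch08} to the present setting, the only genuine new feature being the summable damping sequence. Let $\delta_n\to0^+$ be an arbitrary null sequence, let $y^{\delta_n}$ satisfy $\norm{y^{\delta_n}-y}\le\delta_n$, and let $k_n:=k^*(\delta_n,y^{\delta_n})$ be the stopping index given by the discrepancy principle \eqref{eq:discr_princ1}--\eqref{eq:discr_princ2}. First I would note that each $k_n$ is finite: by Corollary \ref{corollary} and the standing summability assumption $\sum_k\lambda_k<+\infty$ (the one used in Theorem \ref{th:convergence}) the upper bound in Corollary \ref{corollary} is bounded uniformly in $k_n$, so $k_n(\tau\delta_n)^2$ stays bounded. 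Write $\udag$ for the solution of $F(u)=y$ to which the noise-free iteration converges by Theorem \ref{th:convergence}; recall $\udag=u^*$ when $u_0=\frac1n\sum_{i=1}^n u^{(i)}$ and $\mathcal{N}(F'(u^*))\subset\mathcal{N}(F'(u))$ on $B_{\rho+c(\rho)}(u_0)$. The argument splits according to whether $(k_n)$ is bounded.

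\emph{Bounded case.} Along a subsequence $k_n\equiv\bar k$ for some fixed $\bar k$. For every $k<k_n$ the discrepancy principle gives $\norm{y^{\delta_n}-F(u^{\delta_n}_k)}>\tau\delta_n$, which by the choice of $\tau$ in \eqref{eq:discr_princ2} entails the sufficient condition \eqref{eq:suff_cond}; hence Lemma \ref{prop:suff_cond} applied inductively keeps $u^{\delta_n}_0,\dots,u^{\delta_n}_{\bar k}$ inside $B_{c(\rho)}(\udag)\subset B_{\rho+c(\rho)}(u_0)$. On that ball each iteration step is jointly continuous in $(u,y^{\delta})$ (continuity of $F$ and $F'$), so $y^{\delta}\mapsto u^{\delta}_{\bar k}$ is continuous and $u^{\delta_n}_{\bar k}\to u_{\bar k}$. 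Letting $n\to\infty$ in $\norm{y^{\delta_n}-F(u^{\delta_n}_{\bar k})}\le\tau\delta_n$ and using $y^{\delta_n}\to y$ gives $F(u_{\bar k})=y$, so $u_{\bar k}$ is a solution. When in addition $u_0$ is the average and the nullspace inclusion holds, the relation $u_k-u_0\in\mathcal{N}(F'(u^*))^{\perp}$ established in the proof of Theorem \ref{th:convergence} holds at $k=\bar k$; combined with $u^*-u_0\in\mathcal{N}(F'(u^*))^{\perp}$ and $\norm{F'(u^*)(u_{\bar k}-u^*)}\le(1+\eta)\norm{F(u^*)-F(u_{\bar k})}=0$, this forces $u_{\bar k}=u^*$.

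\emph{Unbounded case $k_n\to\infty$.} Here I would use the almost-monotonicity that already shows up inside the proof of Corollary \ref{corollary}: for $k<k_n$ (so \eqref{eq:suff_cond} holds)
\[
\norm{\ukpd-\udag}^2\le\norm{\ukd-\udag}^2+2\lambda_k\rho^2\Bigl(1+\tfrac{L^2}{\kappa^2}\Bigr).
\]
Summing over $K\le k<k_n$,
\[
\norm{u^{\delta_n}_{k_n}-\udag}^2\le\norm{u^{\delta_n}_{K}-\udag}^2+2\rho^2\Bigl(1+\tfrac{L^2}{\kappa^2}\Bigr)\sum_{k\ge K}\lambda_k
\]
for any $K<k_n$. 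Given $\varepsilon>0$, pick $K$ with $\norm{u_K-\udag}<\varepsilon$ (Theorem \ref{th:convergence}) and $2\rho^2(1+L^2/\kappa^2)\sum_{k\ge K}\lambda_k<\varepsilon^2$ (summability). Since $k_n\to\infty$ we have $K<k_n$ and $u^{\delta_n}_0,\dots,u^{\delta_n}_{K}\in B_{\rho+c(\rho)}(u_0)$ for all large $n$, so $u^{\delta_n}_{K}\to u_K$ by the continuity argument of the bounded case, and therefore $\norm{u^{\delta_n}_{k_n}-\udag}^2\le 2\norm{u^{\delta_n}_{K}-u_K}^2+2\norm{u_K-\udag}^2+\varepsilon^2<4\varepsilon^2$ for $n$ large. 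As $\varepsilon$ is arbitrary, $u^{\delta_n}_{k_n}\to\udag$ ($=u^*$ under the nullspace condition).

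Finally, since the null sequence $(\delta_n)$ was arbitrary, every such sequence has a subsequence along which $u^{\delta}_{k^*}$ converges to a solution of $F(u)=y$, and under the nullspace condition that limit is always $u^*$, giving $u^{\delta}_{k^*}\to u^*$ as $\delta\to0^+$. The main obstacle, and what makes the proof more than a transcription of the undamped case, is the unbounded case: the loss $\norm{\ukd-\udag}$ is not monotone, only monotone up to the summable perturbation $2\lambda_k\rho^2(1+L^2/\kappa^2)$, so the reference index $K$ must be chosen to control simultaneously the noise-free convergence from Theorem \ref{th:convergence} and the tail $\sum_{k\ge K}\lambda_k$. A secondary difficulty is that, in the bounded case, the usual ``stationarity once the residual vanishes'' shortcut for Landweber is unavailable, so pinning the limit to $u^*$ must go through the orthogonality bookkeeping $u_k-u_0\in\mathcal{N}(F'(u^*))^{\perp}$ --- which is exactly the role of the hypothesis $\mathcal{N}(F'(u^*))\subset\mathcal{N}(F'(u))$.
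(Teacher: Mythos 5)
Your proposal is correct and follows essentially the same route as the paper: finiteness of $k^*$ via Corollary \ref{corollary}, the case split on whether the stopping indices stay bounded, continuity of $y^{\delta}\mapsto u^{\delta}_{k}$ for fixed $k$ in the bounded case, and Theorem \ref{th:convergence} plus summability of $\lambda_k$ in the unbounded case; the only (harmless) deviation is that in the unbounded case you telescope the squared-norm inequality with the additive perturbation $2\lambda_k\rho^2(1+L^2/\kappa^2)$, whereas the paper iterates the linear recursion \eqref{eq:aux5} and uses $\prod_{j}(1-\lambda_j)\to1$. Your extra orthogonality argument pinning the bounded-case limit to $u^*$ is more explicit than the paper, which simply defers to the reasoning of Theorem \ref{th:convergence}.
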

\begin{proof}
    Using the same notation above, we denote by $\udag$ the limit of the generalized iteratively regularized Landweber iteration when the precise data $y$ are employed. Let, moreover, $\delta_n$ be a sequence that converges to zero as $n\to \infty$ and denote by $y_n:=y^{\delta_n}$ the perturbed data and with $k_n=k^*(\delta_n,y_n)$ the stopping index provided by the application of the discrepancy principle to the pair $(\delta_n,y_n)$. 
We now distinguish two cases as $n\to\infty$:
    \begin{enumerate}
	\item $k_n\to k,\, k\in \mathbb{R}^+$;
	\item $k_n \to +\infty$.
	\end{enumerate}
\textit{Case (i)}: to avoid technicalities, we can assume that $k_n=k$, for all $n\in\mathbb{N}$. Therefore, from \eqref{eq:discr_princ1} we get
	\begin{equation*}
	\norm{F(u_{k}^{\delta_n})-y_n}\leq \tau \delta_n.
	\end{equation*}	 
	From the continuity hypotheses on $F$ and $F'$ we have that
	\begin{equation*}
	u^{\delta_n}_k \to u_k,\qquad F(u^{\delta_n}_k)\to F(u_k)=y,\qquad \text{as}\ \, n\to \infty.
	\end{equation*}
Therefore, we find that $u_k$ is a solution of the operator equation $F(u)=y$ and thus $\udag=u_k$.
\textit{Case (ii)}: We choose $k$ sufficiently large such that $k_n> k$. By means of \eqref{eq:aux5}, we find that
\begin{equation}\label{eq:aux7}
\norm{u^{\delta_n}_{k_n}-\udag}\leq \prod_{j=k}^{k_n-1}(1-\lambda_j)\norm{u^{\delta_n}_k-\udag}+4\rho\sqrt{1+\frac{L^2}{\kappa^2}}\left(1-\prod_{j=k_n-1-k}^{k_n-1}(1-\lambda_j) \right).
\end{equation}
We note that
\begin{equation*}
    \norm{u^{\delta_n}_k-\udag}\leq \norm{u^{\delta_n}_k-u_k}+\norm{u_k-\udag},
\end{equation*}
hence, given $\varepsilon>0$, from Theorem \ref{th:convergence}, we can find  $k=k(\varepsilon)$ such that $\norm{u_k-\udag}\leq \frac{\varepsilon}{2}$. Choosing $n$ sufficiently large, we also have that $\norm{u^{\delta_n}_k-u_k}<\frac{\varepsilon}{2}$, for all $n>n(\varepsilon,k)$. Then, using these two estimates, the fact that the second term on the right-hand side of \eqref{eq:aux7}  is going to zero thanks to \eqref{eq:aux6}, and that $1-\lambda_j<1$, for all $j=k,\ldots,k_n-1$, we finally get that $u^{\delta_n}_{k_n}\to \udag$, as $n\to\infty$. The second statement of the theorem follows using the same line of reasoning and recalling that if $\mathcal{N}(F'(\udag))\subset \mathcal{N}(F'(u))$, for all $u\in B_{\rho+c(\rho)}(u_0)$, then, from Theorem \ref{th:convergence}, the iterates are converging to $u^*$.   
\end{proof}
\begin{remark}
    The same results hold also in the case of the iterates \eqref{eq:GIRLI-GM}, since they follow from the same estimates provided in Lemma \ref{prop:suff_cond}.
\end{remark}

\section{Numerical experiments}\label{sec:NM}
We present some numerical results comparing various iterative methods. Specifically, in the first section, we will focus the attention on the classical Landweber iteration, the iteratively regularized Landweber method, as presented in \cite{Sch98}, the data-driven iteratively regularized Landweber iteration, as proposed in \cite{AspBanOktSch20}, and finally the generalized iteratively regularized Landweber method analyzed in this paper.
In the second section, instead, we compare the generalized iteratively regularized Landweber iteration with the outcomes given by GIRLI-GM and a variant of the iteratively regularized Landweber iteration.\\ 
We focus the attention on linear problems, taking as prototype of the operator $F$ the Radon transform, that is
\begin{equation}\label{eq:operator_R}
Ru=y.
\end{equation}
We refer, for instance, the reader to \cite{Kuc14,OlafQuin06} for the definition and properties of the Radon transform.\\     
We present a concise summary in the table below, see Table \ref{tab: table0}, outlining the methods slated for comparison appearing in the subsequent two sections, accompanied by their respective acronyms and iterates tailored specifically for the Radon transform scenario.
{	\begin{table}[!h]\caption{Summary of the methods and their acronyms.}\label{tab: table0}
		\begin{center}
			{\renewcommand{\arraystretch}{1.2}
				\begin{tabular}{|l|c|}
					\cline{1-2} & \vspace{-0.2cm} \\
					\multirow{2}{4em}{Acronym} &  Name of the method \\ & and \\
                    & Expression of Iterates \\ \hline
                    & \\
                    \multirow{2}{4em}{GIRLI} & Generalized Iteratively Regularized Landweber Iteration \\
                    & \vspace{-0.3cm} \\
                    & $\ukpd := (1-\lambda_k)\ukd - \omega_R R^*\bigl( R\ukd-\yd \bigr) + \frac{\lambda_k}{n}\sum_{i=1}^{n} u^{(i)}$  \\ 
                    & \\ \cline{1-2}
                    & \\
                    {GIRLI-adapt} & Generalized Iteratively Regularized Landweber Iteration \\
                    & with adaptation of the stabilizing term \\
                    & \vspace{-0.3cm} \\
                    & $\ukpd := (1-\lambda_k)\ukd - \omega_R R^*\bigl( R\ukd-\yd \bigr) + \frac{\lambda_k}{n_k}\sum_{i=1}^{n_k} u^{(i)}$,  \\
                    & \vspace{-0.3cm} \\
                    & $\norm{\ukd-u^{(i)}}\leq \textrm{tol}$, 
                    $i=1,\ldots,n_k$, $k>k_0$\\
                    & \\ \cline{1-2}
                    & \\
                     {GIRLI-GM} & Generalized Iteratively Regularized Landweber Iteration \\
                    & with Geometric Mean \\
                    & \vspace{-0.3cm} \\
                    &  $\ukpd := (1-\lambda_k)\ukd - \omega_R R^*\bigl( R\ukd-\yd \bigr) +\lambda_k\sqrt[n]{\prod_{i=1}^{n}u^{(i)}}$\\ 
                    & \\ \cline{1-2}
                    & \\
					\multirow{2}{4em}{DDIRLI} & Data-Driven Iteratively Regularized Landweber Iteration  \\
                    & \vspace{-0.3cm} \\
                    & $\ukpd := \ukd - \omega_R R^*\bigl( R\ukd-\yd \bigr) - {\beta^{\delta}_k}{A}^*\bigl({A}\ukd -\yd \bigr)$  \\ 
                    & \\
                    \cline{1-2}
                    & \\
                   \multirow{2}{4em}{IRLI} & Iteratively Regularized 
                    Landweber Iteration \\
                    & \vspace{-0.3cm} \\
                    & $\ukpd := \ukd - \omega_R R^*\bigl( R\ukd-\yd \bigr) - \lambda_k (\ukd -u^{(0)})$  \\ 
                    & \\ 
                    \cline{1-2}
                    & \\
                    {IRLI-revised} & Iteratively Regularized Landweber Iteration Revised \\
                    & \vspace{-0.3cm} \\
                    &  $\ukpd := \ukd - \omega_R R^*\bigl( R\ukd-\yd \bigr) -\mu_k (\ukd - u^{(i)}),\qquad i\equiv k\mod n$\\ 
                    & \\ 
                    \cline{1-2}
                    & \\
					LANDWEBER & Landweber method \\
                    &  \vspace{-0.3cm} \\
                    & $\ukpd := \ukd - \omega_R R^*\bigl( R\ukd-\yd \bigr)$\\ 
                    & \\
                    \hline   
				\end{tabular}
			}
		\end{center}   
	\end{table} 
}

\subsubsection*{GIRLI vs. classical and specific data-driven methods}
The iterates in \eqref{eq:GIRLI} specialized to the case of the Radon transform reads as 
\begin{equation}\label{eq:GIRLI-Radon}
\begin{aligned}
&\ukpd := (1-\lambda_k)\ukd - \omega_R R^*\bigl( R\ukd-\yd \bigr) + \frac{\lambda_k}{n}\sum_{i=1}^{n} u^{(i)},\qquad k\in\mathbb{N},\\
\end{aligned}
\end{equation}
where $R^*$ is the backprojection operator and $\omega_R$ is a positive constant such that $\norm{R^*(R\ukd-\yd)}\leq1$.
Regarding the data-driven approach proposed in \cite{AspBanOktSch20}, we recall that 
\begin{equation}\label{eq:DDIRLI-Radon}
\ukpd := \ukd - \omega_R R^*\bigl( R\ukd-\yd \bigr) - {\beta^{\delta}_k}{A}^*\bigl({A}\ukd -\yd \bigr), \qquad k\in\N,
\end{equation} 
where ${A}$ is an handcrafted operator and $\beta^{\delta}_k\leq C\norm{R\ukd-\yd}^2$, with $C>0$. We refer the reader to Section 3 of \cite{AspBanOktSch20} for more details. Here, we recall only the necessary tools for the implementation (see below). 
Finally, we recall that the results of \eqref{eq:GIRLI-Radon} are also compared with the outcomes of the Landweber iteration, that is
\begin{equation}\label{eq:Land_R}
\ukpd := \ukd - \omega_R R^*\bigl( R\ukd-\yd \bigr), \qquad k\in\N,
\end{equation}
and of the Iteratively Regularized Landweber Iteration given by
\begin{equation}\label{eq:IRLI_classic}
\ukpd := \ukd - \omega_R R^*\bigl( R\ukd-\yd \bigr) - \lambda_k (\ukd -u^{(0)}), \qquad k\in\N.
\end{equation} 
For our purposes, we use the following set of parameters and training data: 
\begin{enumerate}
    \item\label{item: point1}We use a set of $180$ equally spaced angles $\theta$ distributed within the interval $[0, \pi)$, that is we take $\Theta=\Big\{0,\frac{\pi}{180},\frac{\pi}{90},\cdots,\frac{179\pi}{180}\Big\}$;
    \item \label{item: point2}A dataset comprising $n=150$ grayscale images (denoted by $u^{(i)}$) of handwritten digits sourced from the MNIST database, along with their corresponding sinograms (denoted by ${y}^{(i)}$) generated using the 180 directions specified in \eqref{item: point1}. 
\end{enumerate}
GIRLI and DDIRLI use the following data pairs (or only a part) for their implementation  
\begin{equation}\label{eq: train_data_finite}
({u}^{(i)},{y}^{(i)})\in \mathbb{R}^N \times \mathbb{R}^M,\,\, \textrm{for}\,\, i=1,\cdots,n,
\end{equation}
where $N$, $M$ are positive integers.  
In particular, the matrix $A\in\mathbb{R}^{M\times N}$ is defined as the linear map such that 
\begin{equation}\label{eq: def_A}
AU=Y, 
\end{equation}
where $U\in\mathbb{R}^{N\times n}$ and $Y\in\mathbb{R}^{M\times n}$ are the matrices which contain columnwise the data $u^{(i)}$ and $y^{(i)}$, respectively, for $i=1,\cdots,n$. 
In \cite{AspBanOktSch20}, the matrix $A$ is obtained by utilizing a Singular Value Decomposition (SVD) on $U$, that is 
\begin{equation}\label{eq: def_A_calc}
A=YU^{\dagger}.
\end{equation}
In the implementation, for the set of parameters we make the following choices:
\begin{enumerate}
	\item[(a)] We choose
	\begin{equation}\label{value lambdak}
	\omega_R=10^{-2}\quad \textrm{and}\quad C= 77\times10^{-6} (\textrm{in DDIRLI}).
	\end{equation}
	\item[(b)] \label{stopping_criteria} We employ the discrepancy principle \eqref{eq:discr_princ1} as the stopping rule, with the specific choice of $\tau$ detailed for each test (refer to tables below), or a maximum iteration limit of 1000. 
	\item[(c)] The synthetic data ${\yd}$ is created by introducing Gaussian-distributed noise with a mean of zero and a variance of $\sigma^2$, which is defined for each test, to the matrix of the exact data ${y}$.
	\item[(d)] In the iterations of IRLI and Landweber, we initialize with the guess $u_0$ selecting a digit similar to the one we intend to reconstruct, which is chosen among the digits in the MNIST validation set (see for an example Figure \ref{fig: choice_u0}). Additionally, in IRLI, we enforce that $u^{(0)}=u_0$.
    In DDIRLI, we adopt the assumption that $u_0=0$ to facilitate the guidance of iterations by the supplementary damping term ${\beta^{\delta}_k} {A}^*\bigl({A}\uk -\yd \bigr)$. In GIRLI, as stated in the theoretical part, we use as initial guess $u_0=\frac{1}{n}\sum_{i=1}^{n}u^{(i)}$.  
\end{enumerate}
\begin{figure}[!h]
\centering
		\includegraphics[scale=0.6]{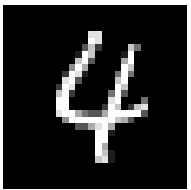}\hspace{2cm}
		\includegraphics[scale=0.6]{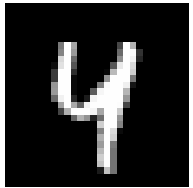}\hspace{2cm}
	\caption{Left-hand side: image to be reconstructed. Right-hand side: initial guess in IRLI and Landweber iterations.}\label{fig: choice_u0}
\end{figure}
We utilized the training and validation sets from the MNIST dataset of handwritten digits \cite{LecunBottBengHaff}. Our study comprises four distinct numerical experiments. In \textbf{Test 1}, we reconstruct a digit from the validation set of MNIST, specifically an image not employed in creating the matrix $A$ or included in $u_0$ for both IRLI and GIRLI. In \textbf{Test 2}, we consider the same image as in Test 1, but we modify the iteration of GIRLI in the following way: starting from a certain step $k$, chosen arbitrarily, the elements $u^{(i)}$ contained in $u_0$ (in GIRLI) are compared with $u_k$, at each iteration. If the norm of the difference $u^{(i)} - u_k$ is sufficiently large (e.g., greater than 3.2 in the following numerical examples), then the element $u^{(i)}$ is removed from the set of training pairs.
Finally, in \textbf{Test 3} and in \textbf{Test 4} we consider the same situation of Test 1 and Test 2 but assuming only a knowledge of limited data, that is we do not know the entire sinogram $\yd$ but only a portion of it.

The choices and values of key parameters utilized for data generation and the implementation of various schemes are meticulously outlined in the tables below (Tables \ref{tab: table1}, \ref{tab: table2}, \ref{tab: table3}, \ref{tab: table4}). In the left section of each table, we specify the variance choice in Gaussian noise, along with the values of $\delta$ and $\tau$. The right section presents select numerical test results, encompassing the total number of iterations before meeting one of the two stopping criteria in (b), the computational time, and, finally, the relative error $\norm{u_{\textrm{true}}-u_{\textrm{rec}}}^2_2/\norm{u_{\textrm{true}}}^2_2$, where $\norm{\cdot}_2$ is the Euclidean norm. Here, $u_{\textrm{true}}$ is the image to be reconstructed, and $u_{\textrm{rec}}$ is the reconstructed image derived from various iterative methods. Importantly, note that the execution time of DDIRLI is influenced by both the construction of matrix $A$ and the iterations of \eqref{eq:DDIRLI-Radon}.

\textbf{Test 1.} (Figure \ref{fig: test1}). \textit{Target: reconstruct a digit from the validation set}. 
The initial data $\yd$ is obtained from the sinogram of the true image $y$ by introducing Gaussian-distributed noise with a mean of zero and variance $\sigma^2=0.5$. For the discrepancy principle \eqref{eq:discr_princ1}, we select $\tau=1.1$ (refer to Table \ref{tab: table1}). Furthermore, we set a maximum allowed number of iterations equal to 1000. GIRLI and IRLI incur a significant computational time cost, requiring a high number of iterations, with both algorithms often reaching the maximum allowable iterations. Despite this, the outcomes are commendable, albeit the DDIRLI algorithm stands out for delivering the best results both in terms of the computational time cost and the relative error. Note that the computational time cost of DDIRLI comprises both the time spent on DDIRLI iterations and the time required to construct the matrix A using the Singular Value Decomposition method.  

{	\begin{table}[!h]\caption{Test 1. Left part: Parameters used in the test. Right part: some of the results of the test.}\label{tab: table1}
		\begin{center}
			{\renewcommand{\arraystretch}{1.2}
				\begin{tabular}{|l|c|c|c||c|c|c|}
					\cline{1-7} & & & & & & \vspace{-0.2cm} \\
					Method & $\sigma^2$-noise& $\delta$ & $\tau$ & Iterations & Comp. Time (s) & $\frac{\norm{u_{\textrm{true}} - u_{\textrm{rec}}}_2}{\norm{u_{\textrm{true}}}_2}$\\ \hline
                    {GIRLI} &  &  &  & 999 & 5.4259 & 0.2355 \\ \cline{1-1}\cline{5-7}
					{DDIRLI} &  &  &  & 35 & 0.6123 & 0.2089 \\ \cline{1-1}\cline{5-7}
					{IRLI} & 0.5  & 13.6477
					& 1.1  & 999 & 5.6677 & 0.2904 \\ \cline{1-1}\cline{5-7} 
					LANDWEBER &  & &  & 98 &0.5445
					& 0.2397  \\ \hline   
				\end{tabular}
			}
		\end{center}   
	\end{table} 
}

\begin{figure}[!h]
	\centering
        \includegraphics[scale=0.35]{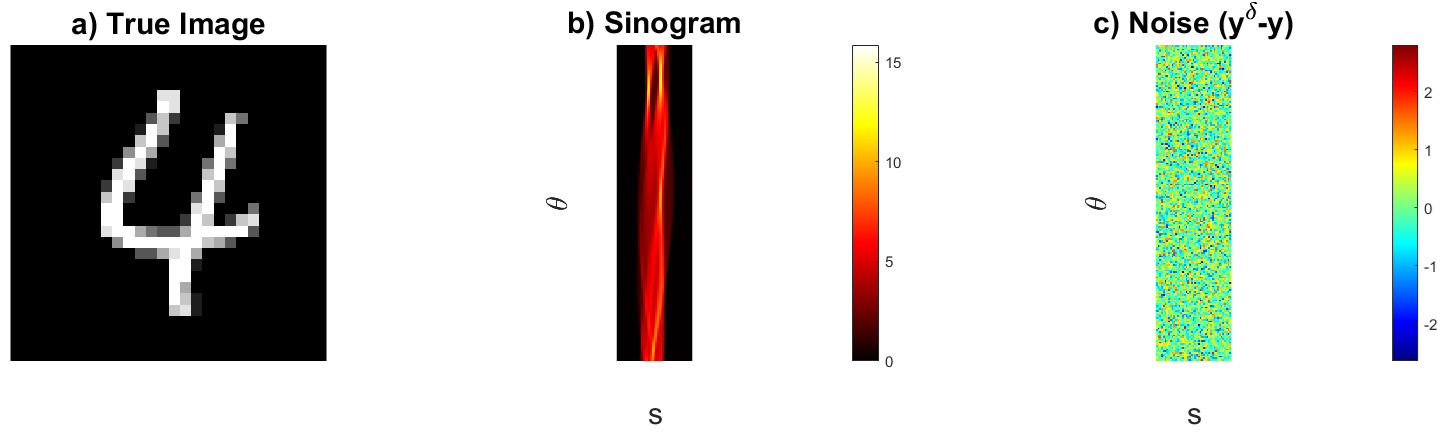}
	\includegraphics[scale=0.35]{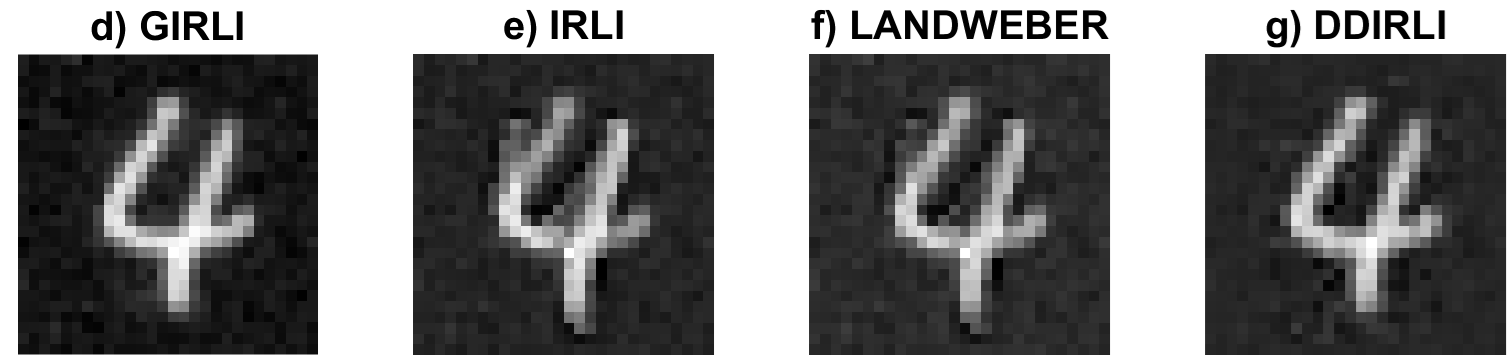}
	\caption{Test 1. Reconstruction of an image from the validation set. a) Image to be reconstructed; b) Plot of the cropped sinogram of the true image, i.e., $y$; c) Plot of the Gaussian noise, i.e., $\mathbf{y_d} - \mathbf{y}$; d)-g) Reconstructions using various methods.}\label{fig: test1}
\end{figure}

\textbf{Test 2}. (Figure \ref{fig: test2}). \textit{Target: reconstruct a digit from the validation set, removing from the term $u_0$ in GIRLI, after a certain number of iterations, the images that differ excessively from those being obtained through iterates.} We consider the same setting of Test 1 taking into account a modification of the GIRLI procedure, eliminating from $\sum_{i=1}^{n}(u_k-u^{(i)})$, after a certain number of iterations, fixed a priori, the pictures which are sufficiently different from the results obtained by the iteration procedure. We call this new procedure GIRLI-adapt.\\
With this method, we prioritize utilizing images identified by the algorithm as closely resembling our intended reconstruction. At each iteration, we compute the difference between the values of the iteration $u_{k+1}$ and those of the images $u^{(i)}$. We compute the Euclidean norm of the computed difference, and if the result surpasses a predefined threshold, the associated image is excluded from $\sum_{i=1}^{n}(u_k-u^{(i)})$. In Figure \ref{fig: test2_adapt} this algorithm has been implemented: after the tenth iteration, the described comparison
was made, with a tolerance set for the norm at $3.2$, eliminating from $\sum_{i=1}^{n}(u_k-u^{(i)})$ the images that do not meet the tolerance criterion, namely for which the norm is greater or equal than $3.2$. The only images that are retained are the four appearing in Figure \ref{fig: test2_adapt}. One can observe a small improvement of the result of GIRLI-adapt with respect to GIRLI, see Figure \ref{fig: test2}.

\begin{figure}[!h]
	\centering
        {\includegraphics[scale=0.35]{images/to_be_reconstructed.png}}\hspace{2cm}
	{\includegraphics[scale=0.35]{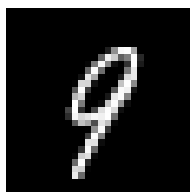}}\hspace{0.5cm}
	{\includegraphics[scale=0.37]{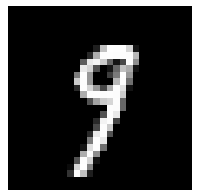}}\hspace{0.5cm}
        {\includegraphics[scale=0.37]{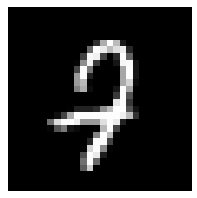}}\hspace{0.5cm}
          {\includegraphics[scale=0.37]{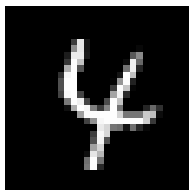}}\vspace{0.5cm}\\
          \includegraphics[scale=0.35]{images/Test2/digit_four.png}\hspace{0.5cm}
	{\includegraphics[scale=0.37]{images/Test2/digit_nine.png}}
	\caption{Test 2. Reconstruction of a digit from the validation set, removing from the term $u_0$ in GIRLI, after a certain number of iterations, the images that differ excessively from those being obtained through iterates. First line (from the left to the right): image to be reconstructed; images recognized by the system as relevant for reconstruction in the early steps. Second line: the only two images retained by the system until the final iteration.}\label{fig: test2_adapt}
\end{figure}

{	\begin{table}[!h]\caption{Test 2. Left part: Parameters used in the test. Right part: some of the results of the test.}\label{tab: table2}
		\begin{center}
			{\renewcommand{\arraystretch}{1.2}
				\begin{tabular}{|l|c|c|c||c|c|c|}
					\cline{1-7} & & & & & & \vspace{-0.2cm} \\
					Method & $\sigma^2$-noise& $\delta$ & $\tau$ & Iterations & Comp. Time (s) & $\frac{\norm{u_{\textrm{true}} - u_{\textrm{rec}}}_2}{\norm{u_{\textrm{true}}}_2}$\\ \hline  
				{GIRLI-adapt} & 0.5 & 13.6477 & 1.1 & 105 & 1.1511 & 0.1937 \\ \hline 
				\end{tabular}
			}
		\end{center}   
	\end{table} 
}

\begin{figure}[!h]
	\centering
	\includegraphics[scale=0.35]{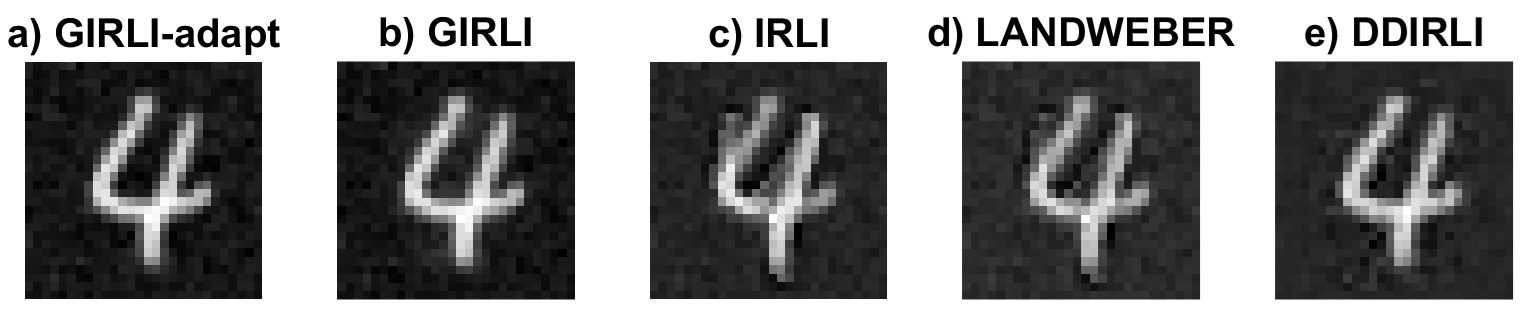}
	\caption{Test 2. Image reconstruction from the validation set. a)-e) Reconstructions using different methods. The adapted version of GIRLI, known as GIRLI-adapt, and DDIRLI produce highly favorable outcomes. However, GIRLI-adapt achieves a similar result to DDIRLI but with significantly more iterations and execution time. It is noteworthy that the results of IRLI and Landweber closely resemble the initially chosen guess.}\label{fig: test2}
\end{figure}

\textbf{Test 3} (Figure \ref{fig: test3}). {\textit{Target: reconstruct a digit from the validation set using partial data}}.
We explore the scenario of partial noisy measurements, achieved by extracting a truncated version of the sinogram $\yd$ with Gaussian-distributed noise of zero mean and variance $\sigma=0.03$. The cropped sinogram retains information only in specific directions, with all other directions set to zero. See Figure \ref{fig: test3}. 
GIRLI and DDIRLI provide good results. As expected, GIRLI is significantly slower compared to DDIRLI, which provides excellent reconstructions in a short amount of time.  
\\

{	\begin{table}[!h]\caption{Test 3. Left part: Parameters used in the test. Right part: some of the results of the test.}\label{tab: table3}
		\begin{center}
			{\renewcommand{\arraystretch}{1.2}
				\begin{tabular}{|l|c|c|c||c|c|c|}
					\cline{1-7} & & & & & & \vspace{-0.2cm} \\
					Method & $\sigma^2$-noise& $\delta$ & $\tau$ & Iterations & Comp. Time (s) & $\frac{\norm{u_{\textrm{true}} - u_{\textrm{rec}}}_2}{\norm{u_{\textrm{true}}}_2}$\\ \hline  
                    {GIRLI} &  & &  & 999 & 7.6294 & 0.4853 \\
                        \cline{1-1}\cline{5-7}
                    {DDIRLI} & 0.03 & 2.78 & 5 & 31 & 0.7669 & 0,3518 \\
                    \cline{1-1}\cline{5-7}
				{IRLI} &   &  &  & 999 & 8.2140 & 0.4565 \\ \cline{1-1}\cline{5-7} 
					LANDWEBER & & &  & 85 & 0.6486
					& 0.4516 \\ \hline 
				\end{tabular}
			}
		\end{center}   
	\end{table} 
}

\begin{figure}[!h]
	\centering
        \includegraphics[scale=0.35]{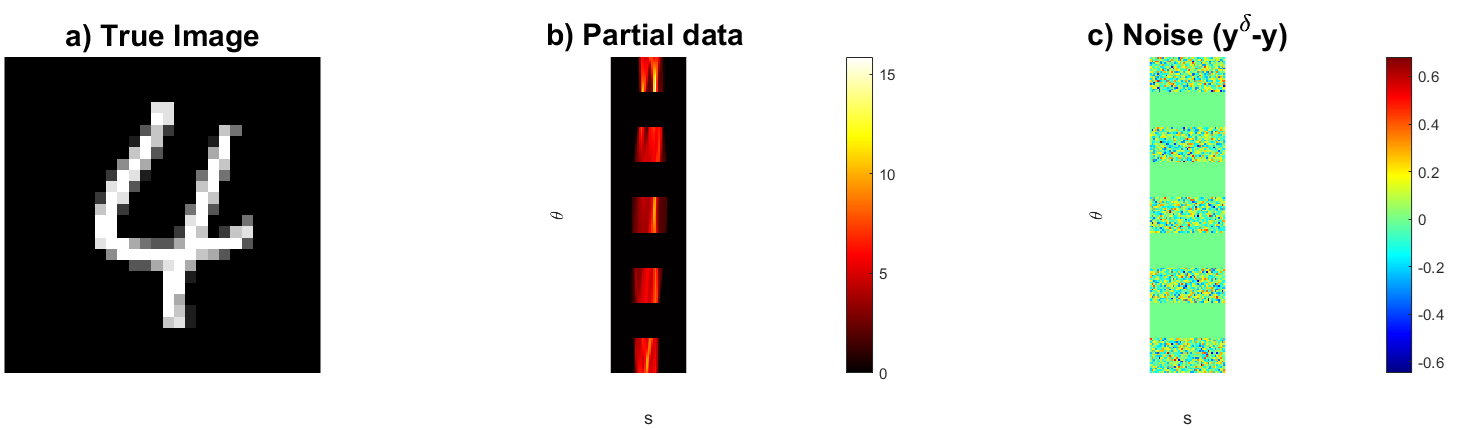}
	\includegraphics[scale=0.35]{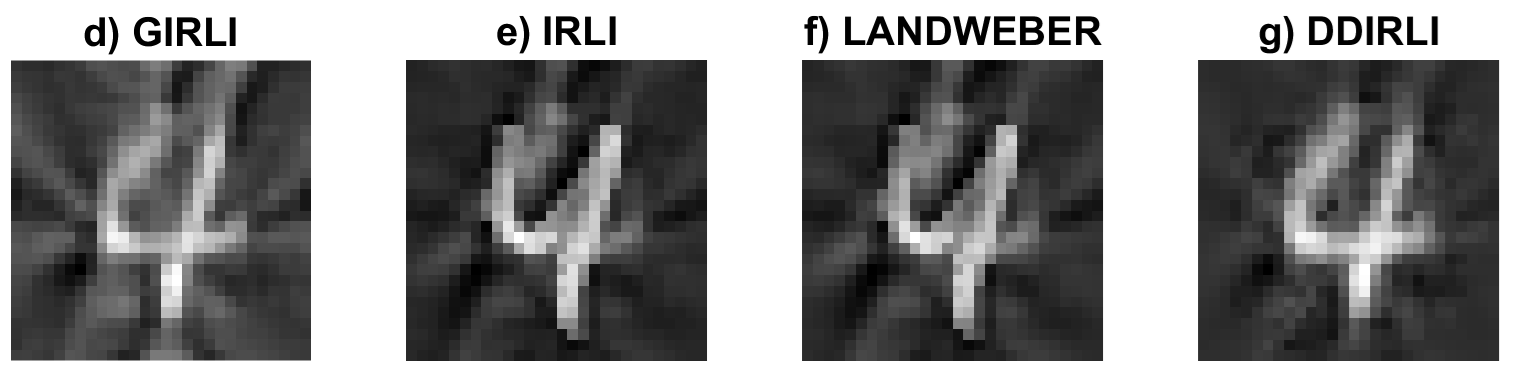}
	\caption{Test 3. Reconstruction of an image from the validation set using partial data. a) Image to be reconstructed; b) Plot of the cropped sinogram of the true image, i.e., $y$; c) Plot of the Gaussian noise, i.e., $\mathbf{y_d} - \mathbf{y}$; d)-g) Reconstructions using various methods.}\label{fig: test3}
\end{figure}

\textbf{Test 4} (Figure \ref{fig: test4}). {\textit{Target: reconstruct a digit from the validation set using partial data, removing from the term $u_0$ in GIRLI, after a certain number of iterations, the images that differ excessively from those being obtained through iterates.}}\\
We use the same numerical example of Test 3. For details on the choice of parameters and some numerical results see Table \ref{tab: table4} and Figure \ref{fig: test4}. \\
In this test, we consider the case where we remove some of the a priori information from the damping term in GIRLI, specifically the data $u^{(i)}$, because they are significantly different from what various iterations provide. This process is applied after a certain number of initial iterations, ensuring that the system has identified at least some elements of the image to be reconstructed. With this approach, we favor using as guesses only images recognized by the method as similar to what we intend to reconstruct. In particular, at each iteration, we calculate the difference between the values of the iteration $u_{k+1}$ and those of the images $u^{(i)}$. For each difference, we compute the Euclidean norm, and if the result exceeds a certain threshold set a priori, the corresponding image is removed from $\sum_{i=1}^{n}(u_k-u^{(i)})$. In Figure \ref{fig: adapt}, we implemented this algorithm: after the tenth iteration, the described comparison was made, with a tolerance set for the norm at $3.2$, eliminating from $\sum_{i=1}^{n}(u_k-u^{(i)})$ the images that do not meet the tolerance criterion, namely for which the norm is greater or equal than $3.2$. The only images that are retained are the four appearing in Figure \ref{fig: adapt}. 
This process is applied at each step, and after some iterations, the method saves only the two elements in the second line of Figure \ref{fig: adapt}.

\begin{figure}[!h]
	\centering
        {\includegraphics[scale=0.35]{images/to_be_reconstructed.png}}\hspace{2cm}
	{\includegraphics[scale=0.35]{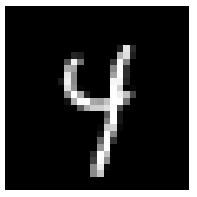}}\hspace{0.5cm}
	{\includegraphics[scale=0.37]{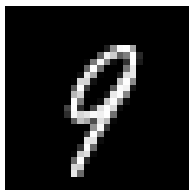}}\hspace{0.5cm}
        {\includegraphics[scale=0.37]{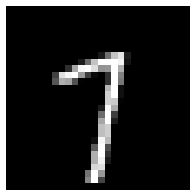}}\hspace{0.5cm}
          {\includegraphics[scale=0.37]{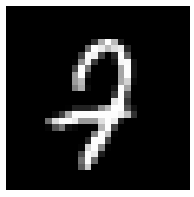}}\vspace{0.5cm}\\
          \includegraphics[scale=0.35]{images/adapt/digit_four.png}\hspace{0.5cm}
	{\includegraphics[scale=0.37]{images/adapt/digit_nine.png}}
	\caption{Test 4. Reconstruction of a digit from the validation set using partial data, removing from the term $u_0$ in GIRLI, after a certain number of iterations, the images that differ excessively from those being obtained through iterates. First line (from the left to the right): image to be reconstructed; images recognized by the system as relevant for reconstruction in the early steps. Second line: the only two images retained by the system until the final iteration.}\label{fig: adapt}
\end{figure}

{	\begin{table}[!h]\caption{Test 4. Left part: Parameters used in the test. Right part: some of the results of the test.}\label{tab: table4}
		\begin{center}
			{\renewcommand{\arraystretch}{1.2}
				\begin{tabular}{|l|c|c|c||c|c|c|}
					\cline{1-7} & & & & & & \vspace{-0.2cm} \\
					Method & $\sigma^2$-noise& $\delta$ & $\tau$ & Iterations & Comp. Time (s) & $\frac{\norm{u_{\textrm{true}} - u_{\textrm{rec}}}_2}{\norm{u_{\textrm{true}}}_2}$\\ \hline  
				{GIRLI-adapt} & 0.03 & 2.78 & 5 & 999 & 13.4658 & 0,4517 \\ \hline 
				\end{tabular}
			}
		\end{center}   
	\end{table} 
}

\begin{figure}[!h]
	\centering
	\includegraphics[scale=0.35]{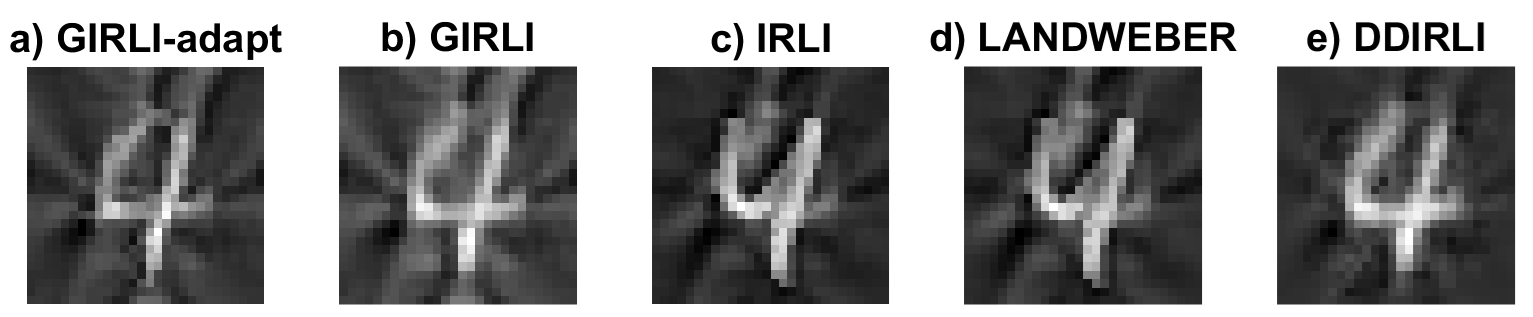}
	\caption{Test 4. Image reconstruction from the validation set. a)-e) Reconstructions using different methods. The adapted version of GIRLI, known as GIRLI-adapt, and DDIRLI produce highly favorable outcomes. However, GIRLI achieves a similar result to DDIRLI but with significantly more iterations and execution time. It is noteworthy that the results of IRLI and Landweber closely resemble the initially chosen guess.}\label{fig: test4}
\end{figure}
 
\subsubsection*{GIRLI vs. GIRLI-GM and an IRLI variant}
We compare the results derived from GIRLI, as per \eqref{eq:GIRLI-Radon}, against those acquired from GIRLI-GM applied to the Radon transform, namely,
\begin{equation}\label{eq:GIRLI-GM-Radon}
\begin{aligned}
&\ukpd := (1-\lambda_k)\ukd - \omega_R R^*\bigl( R\ukd-\yd \bigr) +\lambda^{GM}_k\sqrt[n]{\prod_{i=1}^{n}u^{(i)}},\qquad k\in\mathbb{N}.\\
\end{aligned}
\end{equation}
Following the theoretical framework, we opt for $u_0=\sqrt[n]{\prod_{i=1}^{n}u^{(i)}}$.\\
Next, we extend our analysis to include a comparison with a specific variant of IRLI. In this version, instead of a fixed datum $u^{(i)}$, we incorporate a term that dynamically selects only one datum $u^{(i)}$ at a time, for each iteration $k$, following the condition $i\equiv k \mod n$. 
This guarantees that the selected stabilizing term in IRLI remains dynamic, varying in accordance with the modulo relation, and if the method doesn't converge too early, ensures that, in a recursive manner, all the data are sooner or later incorporated in the iterates, 
\begin{equation}\label{eq:IRLI-Kacz}
\begin{aligned}
&\ukpd := \ukd - \omega_R R^*\bigl( R\ukd-\yd \bigr) -\mu_k (\uk - u^{(i)}),\qquad k\in\mathbb{N},\ \ i\equiv k\mod n,\\
\end{aligned}
\end{equation}
with $\mu_k$ a positive parameter.

We consider three different tests: 
In \textbf{Test 5}, we assume prior knowledge of the digit for reconstruction. As an illustration, let's consider reconstructing the digit $3$. We choose several images $u^{(i)}$ from the MNIST training dataset that represent the digit $3$. A total of $14$ images are selected, and we generate the stabilizing terms for both GIRLI and GIRLI-GM, as depicted in Figure \ref{fig: info_test5}. \\
In \textbf{Test 6}, we show the outcomes of GIRLI and GIRLI-GM using as $u_0$ a common initial guess. \\
Finally, in \textbf{Test 7}, we proceed to compare the outcomes of GIRLI with the modified iterations of IRLI given by \eqref{eq:IRLI-Kacz}.
\begin{figure}[!h]
	\centering
	\includegraphics[scale=0.35]{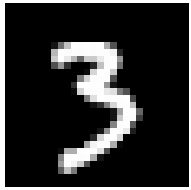}\hspace{3cm}
    \includegraphics[scale=0.35]{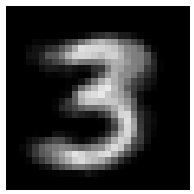}\hspace{1cm}
    \includegraphics[scale=0.35]{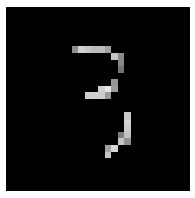}
    \caption{Left image: image to be reconstructed. Moving from the center to the right, we depict the stabilizing terms in GIRLI and GIRLI-GM. These terms are created using 14 images $u^{(i)}$ sourced from the MNIST dataset, specifically by gathering all images representing the number $3$. In the center, there is the stabilizing term $1/14\sum_{i=1}^{14}u^{(i)}$. On the right, we provide the stabilizing term $\sqrt[14]{\prod_{i=1}^{14}u^{(i)}}$. This outcome is not surprising, considering that in the MNIST dataset a substantial number of pixels in the images are zero.}\label{fig: info_test5}
\end{figure}\ \\
\textbf{Test 5} (Figure \ref{fig: test5a} and Figure \ref{fig: test5b}). {\textit{Target: reconstruct a digit from the validation set, comparing GIRLI and GIRLI-GM.}
We implement the iterates \eqref{eq:GIRLI-GM-Radon} and compare the results with GIRLI. Two distinct sets of results are presented. In the first scenario, we assume $\lambda_k=\lambda^{GM}_k=0.01$ for all $k\geq0$, see Figure \ref{fig: test5a} and Table \ref{tab: table5a}. In contrast, the second scenario assumes $\lambda^{GM}_k=0.05$ and $\lambda_k=0.01$ for all $k\geq 0$, see Figure \ref{fig: test5b} and Table \ref{tab: table5b}.
}
\begin{figure}[!h]
	\centering
	\includegraphics[scale=0.3]{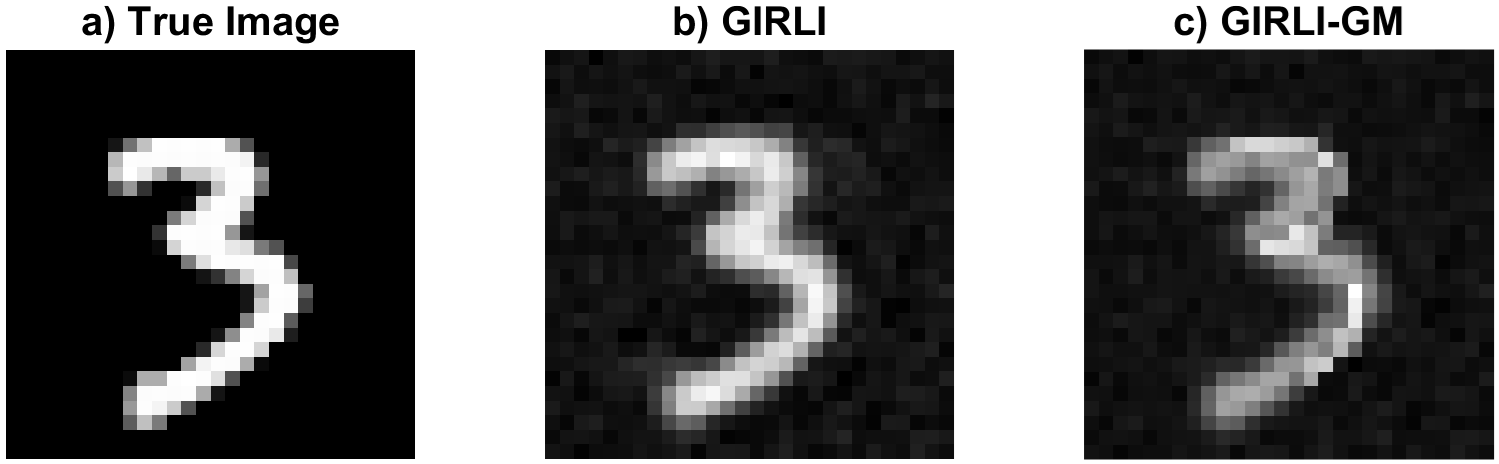}
	\caption{Test 5. Image reconstruction from the validation set. a) Original image to be reconstructed. b)-c) Reconstructions using GIRLI and GIRLI-GM, with $\lambda_k=\lambda^{GM}_k=0.01$. In GIRLI-GM, it is possible to distinguish clearly the contribution provided by the stabilization term.}\label{fig: test5a}
\end{figure}

\begin{figure}[!h]
	\centering
	\includegraphics[scale=0.3]{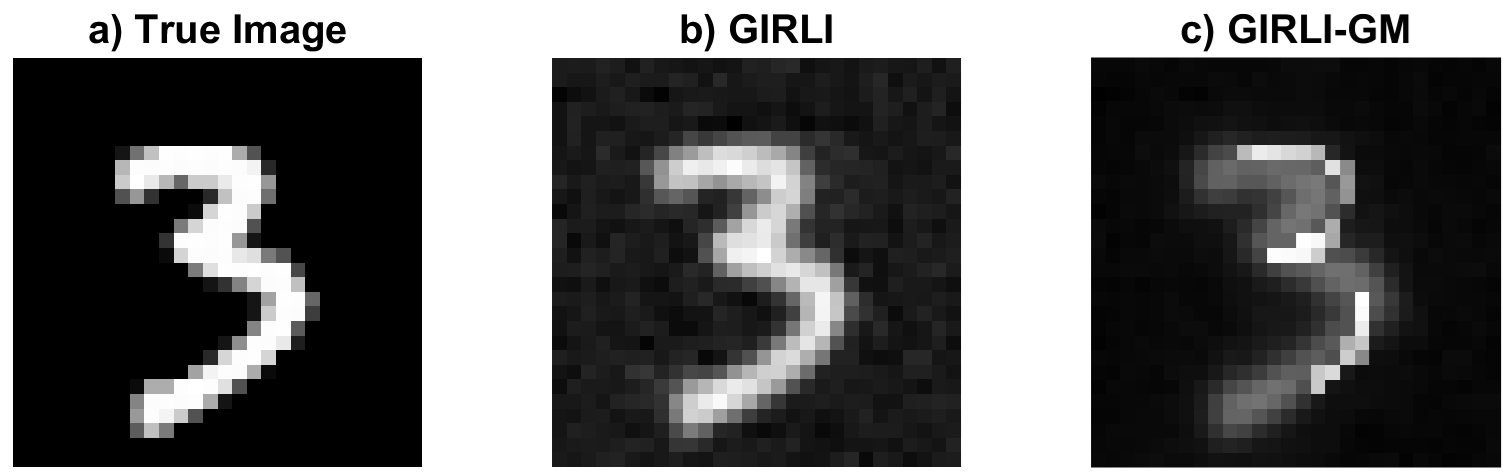}
	\caption{Test 5. Image reconstruction from the validation set. a) Original image to be reconstructed. b)-c) Reconstructions using GIRLI and GIRLI-GM, with $\lambda^{GM}_k=0.05$ and $\lambda_k=0.01$. In GIRLI-GM, the contribution provided by the stabilization method is even more evident compared to the case in Figure \ref{fig: test5a}.}\label{fig: test5b}
\end{figure}
{	\begin{table}[!h]\caption{Test 5. Left part: Parameters used in the test. Right part: some of the results of the test related to Figure \ref{fig: test5a}.}\label{tab: table5a}
		\begin{center}
			{\renewcommand{\arraystretch}{1.2}
				\begin{tabular}{|l|c|c|c||c|c|c|}
					\cline{1-7} & & & & & & \vspace{-0.2cm} \\
					Method & $\sigma^2$-noise& $\delta$ & $\tau$ & Iterations & Comp. Time (s) & $\frac{\norm{u_{\textrm{true}} - u_{\textrm{rec}}}_2}{\norm{u_{\textrm{true}}}_2}$\\ \hline
                    {GIRLI} &  0.5 & 13.3682 & 1.1  & 999 & 9.2136 & 0.1395 \\ \cline{1-1}\cline{5-7}
				{GIRLI-GM} &  &  &  & 999 & 9.0850 & 0.2061 \\ \hline   
				\end{tabular}
			}
		\end{center}   
	\end{table} 
}

{	\begin{table}[!h]\caption{Test 5. Left part: Parameters used in the test. Right part: some of the results of the test related to Figure \ref{fig: test5b}.}\label{tab: table5b}
		\begin{center}
			{\renewcommand{\arraystretch}{1.2}
				\begin{tabular}{|l|c|c|c||c|c|c|}
					\cline{1-7} & & & & & & \vspace{-0.2cm} \\
					Method & $\sigma^2$-noise& $\delta$ & $\tau$ & Iterations & Comp. Time (s) & $\frac{\norm{u_{\textrm{true}} - u_{\textrm{rec}}}_2}{\norm{u_{\textrm{true}}}_2}$\\ \hline
                    {GIRLI} &  0.5 & 13.3682 & 1.1  & 999 & 9.2136 & 0.1395 \\ \cline{1-1}\cline{5-7}
				{GIRLI-GM} &  &  &  & 999 & 8.4266 & 0.3698 \\ \hline   
				\end{tabular}
			}
		\end{center}   
	\end{table} 
}
\textbf{Test 6} (Figure \ref{fig: test6}). {\textit{Target: reconstruct a digit from the validation set, comparing GIRLI and GIRLI-GM, with a common initial guess.}
We implement the iterates \eqref{eq:GIRLI-GM-Radon} and compare the results with GIRLI, employing a shared initial guess for both methods and $\lambda_k=\lambda^{GM}_k=0.01$.
\begin{figure}[!h]
	\centering
	\includegraphics[scale=0.3]{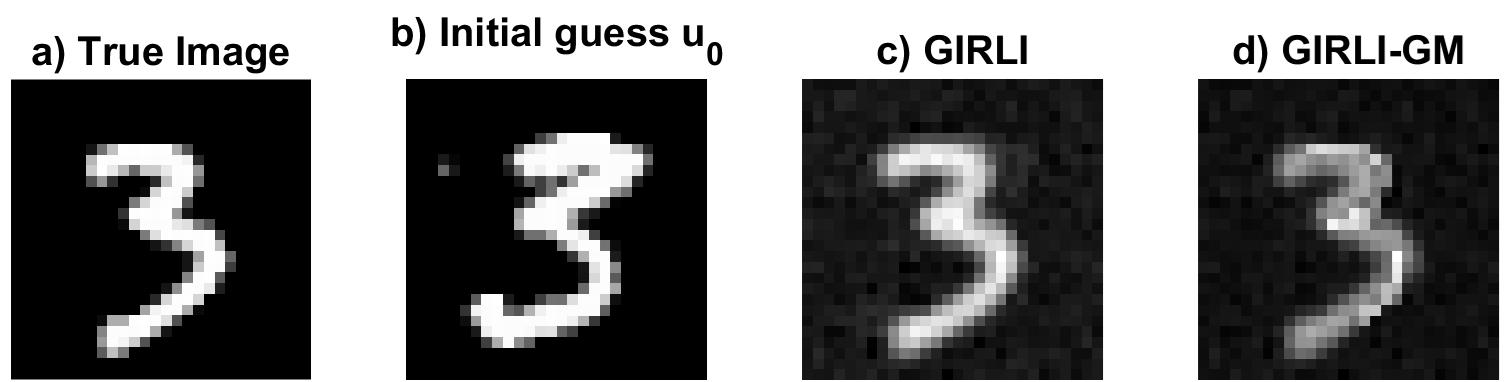}
	\caption{Test 6. Image reconstruction from the validation set. a) Original image to be reconstructed. b) Common initial guess for GIRLI and GIRLI-GM. c)-d) Reconstructions using GIRLI and GIRLI-GM, with $\lambda_k=\lambda^{GM}_k=0.01$.}\label{fig: test6}
\end{figure}

{	\begin{table}[!h]\caption{Test 6. Left part: Parameters used in the test. Right part: some of the results of the test.}\label{tab: table6}
		\begin{center}
			{\renewcommand{\arraystretch}{1.2}
				\begin{tabular}{|l|c|c|c||c|c|c|}
					\cline{1-7} & & & & & & \vspace{-0.2cm} \\
					Method & $\sigma^2$-noise& $\delta$ & $\tau$ & Iterations & Comp. Time (s) & $\frac{\norm{u_{\textrm{true}} - u_{\textrm{rec}}}_2}{\norm{u_{\textrm{true}}}_2}$\\ \hline
                    {GIRLI} &  0.5 & 14.1616 & 1.1  & 999 & 4.7784 & 0.1434 \\ \cline{1-1}\cline{5-7}
				{GIRLI-GM} &  &  &  & 999 & 4.6525 & 0.1960 \\ \hline   
				\end{tabular}
			}
		\end{center}   
	\end{table} 
}

\textbf{Test 7} (Figure \ref{fig: test7}). {\textit{Target: reconstruct a digit from the validation set, comparing GIRLI and the modified version of IRLI given by \eqref{eq:IRLI-Kacz}.}
We implement the iterates \eqref{eq:IRLI-Kacz} with $\mu_k=0.001$ and compare the results with GIRLI, where $\lambda_k=0.01$. In Figure \ref{fig: test7} and Table \ref{tab: table7}, we show the different results.
}

\begin{figure}[!h]
	\centering
	\includegraphics[scale=0.3]{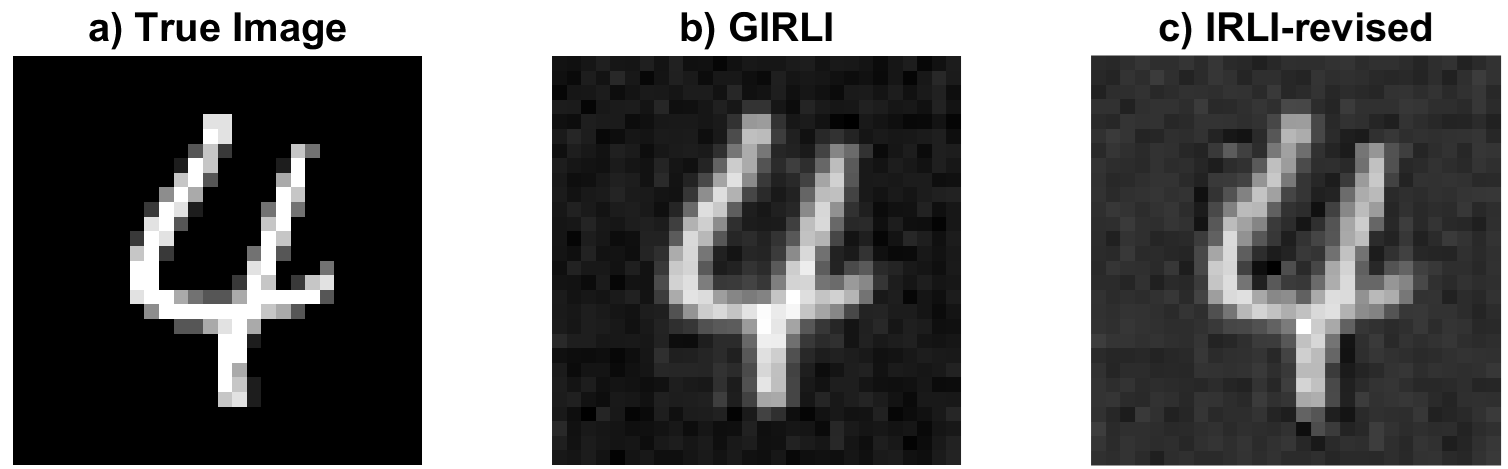}
	\caption{Test 7. Image reconstruction from the validation set. a) Original image to be reconstructed. b)-c) Reconstructions using GIRLI and the modified version of IRLI, with $\lambda_k=0.01$ and $\mu=0.001$.}\label{fig: test7}
\end{figure}

{	\begin{table}[!h]\caption{Test 7. Left part: Parameters used in the test. Right part: some of the results of the test.}\label{tab: table7}
		\begin{center}
			{\renewcommand{\arraystretch}{1.2}
				\begin{tabular}{|l|c|c|c||c|c|c|}
					\cline{1-7} & & & & & & \vspace{-0.2cm} \\
					Method & $\sigma^2$-noise& $\delta$ & $\tau$ & Iterations & Comp. Time (s) & $\frac{\norm{u_{\textrm{true}} - u_{\textrm{rec}}}_2}{\norm{u_{\textrm{true}}}_2}$\\ \hline
                    {GIRLI} &  0.5 & 13.7492 & 1.1  & 999 & 8.7120 & 0.2459 \\ \cline{1-1}\cline{5-7}
				{IRLI-revised} &  &  &  & 101 & 0.8857 & 0.2346 \\ \hline   
				\end{tabular}
			}
		\end{center}   
	\end{table} 
}

\section{Conclusion} \label{sec:conclusion}
In this work, we have analyzed generalized versions of the iteratively regularized Landweber method, both analytically and numerically, demonstrating comparisons with some classical and data-driven algorithms (such as the DDIRLI method presented in \cite{AspBanOktSch20}). Overall, from a numerical perspective, we observed that the DDIRLI method provides very good results with few iterations, while the generalized versions of IRLI require many iterations to achieve comparable results. However, with appropriate precautions, it is possible to expedite these methods, yielding visually impressive outcomes, see for example results of Test 2 and Test 4.
It is important to note that the DDIRLI method is not resource-efficient in terms of memory usage and the amount of data. Therefore, the methods presented in this paper can be regarded as valid alternatives in case issues arise concerning memory constraints and the volume of data in DDIRLI.

\section*{Acknowledgments}
Andrea Aspri is a member of the group GNAMPA (Gruppo Nazionale per l’Analisi Matematica, la Probabilità e le loro Applicazioni) of INdAM (Istituto Nazionale di Alta Matematica). This research has been supported by the project FSE - REACT EU “RICERCA E INNOVAZIONE
2014-2020” (Research and Innovation 2014-2020) and partially performed in the framework of: MIUR-PRIN Grant 2020F3NCPX {\it Mathematics for industry 4.0 (Math4I4)}. \\
Otmar Scherzer's research was funded in whole, or in part, by the Austrian Science Fund (FWF) 10.55776/P34981 -- ``New Inverse Problems of Super-Resolved
Microscopy'' (NIPSUM) and
SFB 10.55776/F68 ``Tomography Across the Scales'', project F6807-N36
(Tomography with Uncertainties). For open access purposes, the author has
applied a CC BY public copyright license to any author-accepted manuscript
version arising from this submission.

The financial support by the Austrian Federal Ministry for Digital and
Economic Affairs, the National Foundation for Research, Technology and
Development and the Christian Doppler Research Association is gratefully acknowledged.

\appendix

\bibliographystyle{plain}
\bibliography{references}

\begin{thebibliography}{10}

\bibitem{AdlOek17}
J.~Adler and O.~\"{O}ktem.
\newblock Solving ill-posed inverse problems using iterative deep neural networks.
\newblock {\em Inverse Problems}, 33(12):124007, 24, 2017.

\bibitem{AgnColLasMurSanSil20}
J.~P. Agnelli, A.~\c{C}\"{o}l, M.~Lassas, R.~Murthy, M.~Santacesaria, and S.~Siltanen.
\newblock Classification of stroke using neural networks in electrical impedance tomography.
\newblock {\em Inverse Problems}, 36(11):115008, 26, 2020.

\bibitem{AlbDeVLasRatSan21}
G.~S. Alberti, E.~De~Vito, M.~Lassas, L.~Ratti, and M.~Santacesaria.
\newblock Learning the optimal tikhonov regularizer for inverse problems.
\newblock In M.~Ranzato, A.~Beygelzimer, Y.~Dauphin, P.S. Liang, and J.~Wortman Vaughan, editors, {\em Advances in Neural Information Processing Systems}, volume~34, pages 25205--25216. Curran Associates, Inc., 2021.

\bibitem{ArrMaaOktSch19}
S.~Arridge, P.~Maass, O.~\"{O}ktem, and C.-B. Sch\"{o}nlieb.
\newblock Solving inverse problems using data-driven models.
\newblock {\em Acta Numer.}, 28:1--174, 2019.

\bibitem{AspBanOktSch20}
A.~Aspri, S.~Banert, O.~\"{O}ktem, and O.~Scherzer.
\newblock A data-driven iteratively regularized {L}andweber iteration.
\newblock {\em Numer. Funct. Anal. Optim.}, 41(10):1190--1227, 2020.

\bibitem{Bak92}
A.~B. Bakushinski\u{i}.
\newblock On a convergence problem of the iterative-regularized {G}auss-{N}ewton method.
\newblock {\em Zh. Vychisl. Mat. i Mat. Fiz.}, 32(9):1503--1509, 1992.

\bibitem{BenCatRug23}
A.~Benfenati, A.~Catozzi, and V.~Ruggiero.
\newblock Neural blind deconvolution with {P}oisson data.
\newblock {\em Inverse Problems}, 39(5):Paper No. 054003, 30, 2023.

\bibitem{BenBur18}
M.~Benning and M.~Burger.
\newblock Modern regularization methods for inverse problems.
\newblock {\em Acta Numer.}, 27:1--111, 2018.

\bibitem{BubBurHelRat23}
T.~A. Bubba, M.~Burger, T.~Helin, and L.~Ratti.
\newblock Convex regularization in statistical inverse learning problems.
\newblock {\em Inverse Probl. Imaging}, 17(6):1193--1225, 2023.

\bibitem{BubGalLasPraRatSilt21}
T.~A. Bubba, M.~Galinier, M.~Lassas, M.~Prato, L.~Ratti, and S.~Siltanen.
\newblock Deep neural networks for inverse problems with pseudodifferential operators: an application to limited-angle tomography.
\newblock {\em SIAM J. Imaging Sci.}, 14(2):470--505, 2021.

\bibitem{BubKutLas19}
T.~A. Bubba, G.~Kutyniok, M.~Lassas, M.~M\"{a}rz, W.~Samek, S.~Siltanen, and V.~Srinivasan.
\newblock Learning the invisible: a hybrid deep learning--shearlet framework for limited angle computed tomography.
\newblock {\em Inverse Problems}, 35(6):064002, 38, 2019.

\bibitem{CanSant22}
V.~Candiani and M.~Santacesaria.
\newblock Neural networks for classification of strokes in electrical impedance tomography on a 3{D} head model.
\newblock {\em Math. Eng.}, 4(4):Paper No. 029, 22, 2022.

\bibitem{EngHankNeu96}
H.~W. Engl, M.~Hanke, and A.~Neubauer.
\newblock {\em Regularization of inverse problems}, volume 375 of {\em Mathematics and its Applications}.
\newblock Kluwer Academic Publishers Group, Dordrecht, 1996.

\bibitem{HalNgu23}
M.~Haltmeier and L.~Nguyen.
\newblock Regularization of inverse problems by neural networks.
\newblock In {\em Handbook of mathematical models and algorithms in computer vision and imaging---mathematical imaging and vision}, pages 1065--1093. Springer, Cham, [2023] \copyright 2023.

\bibitem{KapSom05}
J.~Kaipio and E.~Somersalo.
\newblock {\em Statistical and computational inverse problems}, volume 160 of {\em Applied Mathematical Sciences}.
\newblock Springer-Verlag, New York, 2005.

\bibitem{KalNeuSch08}
B.~Kaltenbacher, A.~Neubauer, and O.~Scherzer.
\newblock {\em Iterative regularization methods for nonlinear ill-posed problems}, volume~6 of {\em Radon Series on Computational and Applied Mathematics}.
\newblock Walter de Gruyter GmbH \& Co. KG, Berlin, 2008.

\bibitem{KalNgu22}
B.~Kaltenbacher and T.T.N. Nguyen.
\newblock Discretization of parameter identification in {PDE}s using neural networks.
\newblock {\em Inverse Problems}, 38(12):Paper No. 124007, 35, 2022.

\bibitem{Kor22}
Y.~Korolev.
\newblock Two-layer neural networks with values in a {B}anach space.
\newblock {\em SIAM J. Math. Anal.}, 54(6):6358--6389, 2022.

\bibitem{Kuc14}
P.~Kuchment.
\newblock {\em The {R}adon transform and medical imaging}, volume~85 of {\em CBMS-NSF Regional Conference Series in Applied Mathematics}.
\newblock Society for Industrial and Applied Mathematics (SIAM), Philadelphia, PA, 2014.

\bibitem{LecunBottBengHaff}
Y.~{Lecun}, L.~{Bottou}, Y.~{Bengio}, and P.~{Haffner}.
\newblock Gradient-based learning applied to document recognition.
\newblock {\em Proceedings of the IEEE}, 86(11):2278--2324, 1998.

\bibitem{OlafQuin06}
G.~\'Olafsson and E~.T. Quinto.
\newblock {\em The Radon Transform, Inverse Problems, and Tomography}.
\newblock Number~63 in Proceeding of Symposia in Applied Mathematics. American Mathematical Society, Providence, Rhode Island, 2006.

\bibitem{Pap62}
A.~Papoulis.
\newblock {\em The {F}ourier integral and its applications}.
\newblock McGraw-Hill Book Co. Inc. New York-San Francisco, Calif. London Toronto, 1962.

\bibitem{Sch98}
O.~Scherzer.
\newblock A modified {L}andweber iteration for solving parameter estimation problems.
\newblock {\em Appl. Math. Optim.}, 38(1):45--68, 1998.

\bibitem{SchHofNas23}
O.~Scherzer, B.~Hofmann, and Z.~Nashed.
\newblock Gauss-{N}ewton method for solving linear inverse problems with neural network coders.
\newblock {\em Sampl. Theory Signal Process. Data Anal.}, 21(2):Paper No. 25, 29, 2023.

\end{thebibliography}

\end{document}